\begin{document}

%%%%%%%%%%%%%%%%%%%%%%%%%%%%%%%%%%%%%%%%%%%%%%%%%%%%%%%%%%%%%%%%%%%
%%                                                               %%
%% No need for \maketitle.                                       %%
%%                                                               %%
%%%%%%%%%%%%%%%%%%%%%%%%%%%%%%%%%%%%%%%%%%%%%%%%%%%%%%%%%%%%%%%%%%%

%%%%%%%%%%%%%%%%%%%%%%%%%%%%%%%%%%%%%%%%%%%%%%%%%%%%%%%%%%%%%%%%%%%
%%                                                               %%
%% Please replace what follows by the body of your article       %%
%% (up to the bibliography):                                     %%
%%                                                               %%
%%%%%%%%%%%%%%%%%%%%%%%%%%%%%%%%%%%%%%%%%%%%%%%%%%%%%%%%%%%%%%%%%%%

\section{Introduction}

We consider a Wright-Fisher process with two-way mutation and selection. This is a classical model of mathematical population genetics, which describes the evolution, forward in time, of the type composition of a population with two types. Individuals reproduce and change type, and the reproduction rate depends on the type (the beneficial type reproduces faster than the less favourable one).

In a previous paper \cite{Unserpaper}, we have presented a graphical construction, termed the \emph{pruned lookdown ancestral selection graph (p-LD-ASG)}, which allows us to 
identify the common ancestor of a population in the distant past, and to represent its type distribution. This construction keeps track of the collection of all \emph{potential} ancestral lines of an individual.
As the name suggests, the p-LD-ASG combines elements of  the \emph{ancestral selection graph (ASG)} of Krone and Neuhauser \cite{KroneNeuhauser} and the \emph{lookdown construction} of Donnelly and Kurtz \cite{DonnellyKurtz}, which here leads to a hierarchy that encodes who is the \emph{true} ancestor once the types have been assigned to the lines.
In addition, a pruning procedure is applied to reduce the graph.

A key quantity is the process $L$, which counts the number of potential ancestors at any given time. The \emph{ancestral} type distribution is expressed in terms of the stationary distribution of $L$ together with the \emph{overall} type distribution in the past population. The two distributions may be substantially different. This mirrors the fact that the true ancestor is an individual that is successful in the long run; thus, its type distribution is biased towards the favourable type. Explicitly, the ancestral type distribution is represented as a series  in terms of the frequency of the beneficial type in the past, where the coefficients are the tail probabilities of the stationary distribution of $L$ and are known in terms of a recursion.

The results obtained so far referred to Wright-Fisher processes. These arise as scaling limits of processes in which an individual that reproduces has a single offspring that replaces a randomly chosen individual (thus keeping population size constant); in the ancestral process, this corresponds to a coalescence event of a pair of individuals. Here we will consider a natural generalisation, the so-called $\Lambda$-Wright-Fisher processes. These include reproduction events where a fraction $z>0$ of the population is replaced by the offspring of a single individual; this leads to \emph{multiple merger events} in the ancestral process.

The $\Lambda$-Wright-Fisher processes belong to the larger class of $\Lambda$-Fleming-Viot processes (which also include multi-(and infinite-)type generalisations). These, together with their ancestral processes, the so-called $\Lambda$-coalescents, have become  objects of intensive research in the past two decades. Although less is known for the case with selection, progress has been made in this direction as well  (see for example \cite{BahPardoux,Depperschmidtetal,DonnellyKurtz,EtheridgeGriffithsTaylor,Griffiths}).

Besides deriving our main result on the {\em common ancestor type distribution} of a $\Lambda$-Wright-Fisher process (stated in Sec.~\ref{sec:model_result}), the purpose of our paper is twofold: First, we will extend the p-LD-ASG to include multiple-merger events; this will lead to the \emph{p-LD-}$\Lambda$\emph{-ASG}.
Second, in the footsteps of Clifford and Sudbury \cite{CliffordSudbury}, we will construct a \emph{Siegmund dual} of the line-counting process $L$ of the p-LD-$\Lambda$-ASG. 
In line with a classical relation between entrance laws of a monotone process and exit laws of its Siegmund dual (discovered by Cox and R\"{o}sler \cite {CoxRoesler}), the tail probabilities of $L$ at equilibrium correspond to   hitting probabilities of the Siegmund dual.  
This Siegmund dual is a new element of the analysis: In \cite{Unserpaper}, the recursions for the tail probabilities were obtained from the generator of $L$, in a somewhat technical manner. The duality provides a more conceptual approach, which is interesting in its own right, and yields the recursion in an elegant way, even in the more involved case including multiple mergers. It will also turn out that the Siegmund dual of $L$ is a natural generalisation (to the case with selection) of the so-called {\em fixation line} (or fixation curve), introduced by Pfaffelhuber and Wakolbinger  \cite{PfaffelhuberWakolbinger} for Kingman coalescents and investigated by H\'{e}nard \cite{olivier} for $\Lambda$-coalescents.

The paper is organised as follows. In Section~\ref{sec:model_result}, we recapitulate the $\Lambda$-Wright-Fisher model with mutation and selection, and the corresponding ancestral process, the $\Lambda$-ASG; we also provide a preview of our main results. In Section~\ref{sec:pruned}, we extend the p-LD-ASG to the case with multiple mergers. Section~\ref{sectionsiegmunddual} is devoted to the Siegmund dual. 
The dynamics of this dual process is identified via a pathwise construction and thus yields a strong duality. Once the dual is identified, it leads to the tail probabilities of $L$ with little effort.

\section{Model and main result} \label{sec:model_result}

We will consider a population consisting of individuals each of which is either of deleterious type (denoted by 1) or of beneficial type (denoted by 0). The population evolves according to random reproduction, two-way mutation, and fertility selection (that is,  the beneficial type reproduces at a higher rate), with constant population size over the generations. The parameters of the model are 
\begin{itemize}
	\item the {\em reproduction measure} $\Lambda$, which is a probability measure on $[0,1]$, and whose meaning will be explained along with that of the generator $G_{X} $ below Eq.~\eqref{generatorlambdawf},
	\item {\em the selective advantage $\sigma$} (a non-negative constant that quantifies the reproductive advantage of the beneficial type and is scaled with population size),
	\item  the {\em mutation rates} $\theta \nu_0$ and $\theta \nu_1$, where $\theta, \nu_0$, and $\nu_1$ are non-negative constants with $\nu_0+\nu_1=1$. Thus, $\nu_i$, $i\in \{0,1\}$, is the  probability that the type is $i$ after a mutation event; note that this includes \emph{silent events}, where the type remains unchanged. 
\end{itemize}
We will work in a scaling limit in which  the population size is infinite and  time is scaled such that the rate at which a fixed pair of individuals takes part in a reproduction event is 1.  The process 
$X:=(X_t)_{t \in \mathbb{R}}$ describing the type-$0$ frequency in the population then has the generator (cf.\ \cite{EtheridgeGriffithsTaylor,Griffiths})

\begin{equation}
\begin{split}
G_{X} g(x) &= \int_{(0,1]}^{} \big[x(g(x+z(1-x))-g(x)) + (1-x)(g(x-zx)-g(x))\big]\frac{\Lambda(dz)}{z^2}\\
& \qquad + \Lambda(\{0\})\cdot \frac{1}{2}x(1-x)g''(x) \ +\ \big[\sigma x(1-x) - \theta\nu_1 x + \theta\nu_0 (1-x) \big]g'(x).  
\end{split}
\label{generatorlambdawf}
\end{equation}

The first and second terms of this generator describe the neutral part of the reproduction.
In the case $\Lambda=\delta_{0}$ (to which we refer as the \textit{Kingman case}), the first term  vanishes and $X$ is a Wright-Fisher diffusion with selection and mutation.
Concerning the part of $\Lambda$  concentrated on $(0,1]$, 
the  measure $dt\, \Lambda(dz) / z^2$ figures as intensity measure of a Poisson process, where a point $(t ,z)$, $t\in \mathbb{R},z\in (0,1]$, means that at time $t$ a fraction $z$ of the total population is replaced by the offspring of a randomly chosen individual. Consequently, if the fraction of type-$0$ individuals is $x$ at time $t-$, then at time $t$  the frequency of type-$0$ individuals in the population is $x+z(1-x)$ with probability $x$ and $x(1-z)$ with probability $1-x$. The third term of generator \eqref{generatorlambdawf} describes the systematic (logistic) increase of the frequency $x$ due to selection, and the type flow due to mutation.

In the absence of both selection and mutation (i.e. when $\sigma = \theta = 0$), the moment dual of the $\Lambda$-Wright-Fisher process is the line-(or block-)counting process of the \textit{$\Lambda$-coalescent}. The latter was introduced independently  by Pitman \cite{pitman1999}, Sagitov \cite{Sagitov}, and Donnelly and Kurtz \cite{DonnellyKurtza}, see  \cite{berestycki2009} for an introductory review.

The rate at which any given tuple of $j$ out of $b$ blocks merges into one is
\begin{equation}
\lambda_{b,j}:= \int_{0}^{1} z^j(1-z)^{b-j} z^{-2} \Lambda(dz), \quad j \leq b.
\label{mergerrateofkblocks}
\end{equation}
Thus the transition rate of the  line-counting process 
from state $b$ to state $c< b$ is given by $\binom{b}{b-c+1} \lambda_{b,b-c+1}$.
Note that   $\Lambda =\delta_0$ corresponds to Kingman's coalescent; here, $\lambda_{b,j} =\delta_{2,j}$ for all $b\ge 2$. The measure $\Lambda$ is said to have the property CDI if the  $\Lambda$-coalescent {\em comes down from infinity}, i.e. $\infty$ is an entrance boundary for its line-counting process. 

 When \textit{selection} is present (i.e. $\sigma >0$), an additional component of the dynamics of the genealogy must be taken into account. In this case, in addition to the  (multiple) coalescences just described, the lines (or blocks) may also undergo a binary branching at rate $\sigma$ per line. The resulting branching-coalescing system of lines is a straightforward generalisation of the ancestral selection graph (ASG) of Krone and Neuhauser \cite{KroneNeuhauser} to the multiple-merger case; we will  call it the \textit{$\Lambda$-ASG}. The $\Lambda$-ASG belonging to a sample of $n$ individuals taken from the population at time $\overline{t}=0$ describes all potential ancestors of this sample at times $t < 0$.
Throughout we use the variables $t$ and $r$ for  \emph{forward} and \emph{backward} time, respectively.

We denote the line-counting process of the $\Lambda$-ASG by $K=(K_r)_{r \geq 0 }$. It takes values in $\mathbb{N}$ and its generator is
\begin{equation}
G_K g(b) =  \sum_{c=1}^{b-1}{\binom{b}{b-c+1}} \lambda_{{b},{b-c+1}} \left[g(c)-g(b)\right] + b\sigma \left[g(b+1)-g(b)\right].
\label{generatorlambdaasg}
\end{equation}
The process $K$ is the moment dual of the $\Lambda$-Wright-Fisher process with selection coefficient~$\sigma$ and mutation rate $\theta=0$, in the sense that
\begin{equation}\label{momentdual}
\mathbb E[(1-X_t)^n \, | X_0=x] = \mathbb E[(1-x)^{K_t}\, | \, K_0 = n],
\end{equation}
see e.g.  \cite[Thm. 4.1]{EtheridgeGriffithsTaylor}. 

Throughout we will work under the
\begin{assumption}\label{assumptioncdi}
$0\le \sigma < \sigma^\ast:=  -\int_0^1 \log(1-x)\frac{\Lambda(dx)}{x^2}.$
\end{assumption}
Combining results of  \cite{Foucart} and \cite{Griffiths}, one infers that Assumption \ref{assumptioncdi} is equivalent to the positive recurrence of the process $K$ on $\mathbb N$. Indeed, it is proved in \cite[Theorem 3]{Griffiths}  (for the case $\sigma^\ast < \infty$) and \cite[Theorem 1.1]{Foucart}  (for the case $\sigma^\ast = \infty$)    that Assumption \ref{assumptioncdi} is {\em equivalent}  to $\mathbb P[X_\infty =1  \mid X_0=x] < 1$ for all $x<1$, where $X_\infty$ denotes the a.s. limit of $X_t$ as $t\to\infty$. Combined with the moment duality \eqref{momentdual}, this readily implies that Assumption \ref{assumptioncdi} is equivalent to the positive recurrence of $K$ on $\mathbb N$ if $\sigma > 0$.  

A direct proof that Assumption \ref{assumptioncdi} {\em implies} the positive recurrence of $K$ on $\mathbb N$ in the case $\sigma > 0$  is provided by \cite[Lemma 2.4]{Foucart}. (Note in this context that $K$ is clearly non-explosive because it is dominated by a pure birth process with birth rate $b\sigma$, $b \in \mathbb N$; this makes the first assumption in \cite[Lemma 2.4]{Foucart} superfluous).

For $\sigma = 0$, the process $K$, when started in $b \in \mathbb N$, is eventually absorbed in $1$. This complements the previous argument in showing that under Assumption \ref {assumptioncdi} the process $K$ has a unique equilibrium distribution and a corresponding time-stationary version indexed by $r\in \mathbb R$. Similarly, there exists a time-stationary version of the $\Lambda$-ASG, which we call the {\em equilibrium $\Lambda$-ASG}, and which will be a principal object in our analysis.
\begin{remark}\label{recurrence}
It is proved in  \cite{HerrigerMoehle} that $\sigma^\ast = \lim_{k\to \infty} \frac{\log k}{\mathbf E_{k}[T_{1}]}$, where $T_1$ is the first time at which the line-counting process of the $\Lambda$-coalescent hits $1$. In particular, 
if the measure $\Lambda$ has the property CDI, then $\sigma^\ast = \infty$  and hence Assumption \ref{assumptioncdi} is satisfied for all $\sigma \ge 0$.
\end{remark}

\textit{Mutations} can be superimposed as independent point processes on the lines of the $\Lambda$-ASG: On each line, independent Poisson point processes of mutations to type $0$ (`beneficial mutation events') come at rate $\theta\nu_0$ and to type $1$ (`deleterious mutation events') at rate $\theta\nu_1$.

For $\underline{t} < \overline{t}$ and for a given frequency $x$ of type-$0$ individuals in the population at time $\underline{t}$, the $\Lambda$-ASG may be used to determine the types in a sample $\mathcal S$ taken at time $\overline{t}$, together with its ancestry between times $\underline{t}$ and $\overline{t}$, by the following generalisation of the procedure in \cite{KroneNeuhauser}. 
Each line of the $\Lambda$-ASG at time $\underline{t}$ is assigned type $0$ with probability $x$ and type $1$ with probability $1-x$, in an iid fashion. Let the types then evolve forward in time along the lines: after each beneficial or deleterious mutation, the line takes type $0$ or $1$, respectively. At each neutral reproduction event (which is a coalescence event backward in time), the descendant lines inherit the type of the parent. This is also true for the (potential) selective reproduction events (the branching events backward in time), but here one first has to decide which of the two lines is parental. The rule is that the \emph{incoming branch} (the line that issues the potential reproduction event) is parental if it is of type $0$; otherwise, the \emph{continuing branch} (the target line on which the potential offspring is placed) is parental.  When all selective events have been resolved this way, the lines that are not parental are removed, and one is left with the {\em true genealogy} of the sample $\mathcal S$.

 Because of the positive recurrence (and the assumed time-stationarity) of the  line-counting process $(K_r)_{-\infty < r < \infty}$, there exists a.s. a sequence of (random) times $0 < t_1 < t_2< \ldots$ such that $t_n \to \infty$ and $K_{t_n} = 1$ for all $n$.
 Thus, for a given assignment of types to the lines of the stationary $\Lambda$-ASG $\mathcal A$ at time $0$, and for all $n \in \mathbb N$, removing the non-parental lines leaves exactly one true ancestral line, between the times $\underline{t}=0$ and $\overline{t}=t_n$, of the single individual in $\mathcal A$ at time $t_n$. The  resulting line between times  $\underline{t}=0$ and $\overline{t}=\infty$ is called the \textit{immortal line} or \textit{line of the common ancestor} in the  stationary $\Lambda$-ASG.
  
Our main result is a characterisation of its type distribution at time $0$, conditional on the type frequency in the population at that time. For the following definition, let $I_t$ be the type of the immortal line in the stationary $\Lambda$-ASG at time $t$.
\begin{definition}[Common ancestor type distribution]
	In the regime of Assumption \ref{assumptioncdi}, and for $x \in [0,1]$, let $h(x):=\mathbb{P}(I_{0}=0\mid X_0=x)$ be the probability that the immortal line in a stationary $\Lambda$-ASG with two-way mutations carries type $0$ at time~$0$, given the type-$0$ frequency in the population at time $0$ is $x$.
	\label{defhx} 
\end{definition}
By shifting the time interval $[0,t]$ back to $[-t,0]$, it becomes clear that $h(x)$  is also the limiting probability (as $t\to \infty$) that the ancestor at the past time $-t$ of the population at time~$0$ is of the beneficial type, given that the frequency of the beneficial type at time $-t$ was $x$.
\begin{theorem}\label{theoremhx}
	The probability $h(x)$ has the series representation
	\begin{equation}\label{gleichunghx}
	h(x)=\sum_{n\geq0} x(1-x)^{n}a_n,
	\end{equation}
	where the coefficients $a_n$ in \eqref{gleichunghx} are monotone decreasing, and the unique solution to the system of equations
	\begin{align}\notag
	& \sum_{n+1< c \le \infty} \left[\frac{1}{n}\binom{c-1}{c-n}\lambda_{c,c-n} \right] 
	(a_n-a_{c-1}) + (\sigma + \theta) a_n 
	=  \sigma a_{n-1} + \theta\nu_1 a_{n+1}, \quad n \geq 1,
	\\
	&\qquad  a_0=1, \quad a_{\infty}:=\lim_{n \to \infty} a_n=0 ,
	\label{thmrecursionanequation}
	\end{align} 
	with the convention 
	\begin{align}\label{convention}
	{\binom{\infty-1}{\infty-d+1}} := \begin{cases} 0 \quad \mbox{ if } d=1\\ 1 \quad \mbox{ if } d\ge 2 \end{cases},  \mbox{ and }\lambda_{\infty,\infty}:= \Lambda(\{1\}). 
	\end{align}
\end{theorem}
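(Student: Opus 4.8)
The plan is to identify $a_n$ as the tail probabilities $\mathbb P(L_\infty > n)$ of the equilibrium line‑counting process $L$ of the pruned lookdown $\Lambda$‑ASG, and to derive the recursion \eqref{thmrecursionanequation} from the stationarity equation $G_L^\ast \pi = 0$ for the equilibrium distribution $\pi$ of $L$ (equivalently, from the strong pathwise Siegmund duality between $L$ and its dual, which yields the same recursion more conceptually). First I would recall from the construction in Section~\ref{sec:pruned} that, given the type‑$0$ frequency $x$ in the old population, the immortal line carries type $0$ iff at least one of the $L_\infty$ lines present at stationarity is assigned type $0$ — here one uses that the pruning keeps exactly the lines whose type (once assigned) can still determine the ancestor, so that the event $\{I_0 = 0\}$ is precisely $\{$not all $L_\infty$ lines are type $1\}$. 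Conditioning on $L_\infty$ and using the iid type assignment with type‑$1$ probability $1-x$ gives
\begin{equation}
h(x) = \mathbb E\big[1 - (1-x)^{L_\infty}\big] = \sum_{n \ge 0} \big(\mathbb P(L_\infty > n) - \mathbb P(L_\infty > n+1)\big)\,\big(1 - (1-x)^{n+1}\big),
\end{equation}
and an Abel summation (rearranging the telescoping sum, legitimate because $\mathbb P(L_\infty>n)\to 0$) rewrites this as $\sum_{n\ge 0} x(1-x)^n \mathbb P(L_\infty > n)$. This forces $a_n = \mathbb P(L_\infty > n)$, which is manifestly monotone decreasing with $a_0 = 1$ and $a_\infty = 0$ (the latter by positive recurrence of $L$, which follows from Assumption~\ref{assumptioncdi} exactly as for $K$, since $L$ differs from $K$ only through the pruning and a bounded perturbation of the rates).

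The core step is then to show $a_n = \mathbb P(L_\infty > n)$ solves \eqref{thmrecursionanequation}. I would write down the generator $G_L$ of the p‑LD‑$\Lambda$‑ASG line‑counting process from the construction: up‑steps at rate $\sigma$ per line and at rate $\theta\nu_1$ per line from deleterious mutation events on the topmost relevant line, down‑steps from coalescences at rates $\binom{c}{c-n}\lambda_{c,c-n}$ (with the combinatorial $1/n$ factor coming from the lookdown ordering: only one of the $n$ ways of relabelling produces a net decrease to $n$), and a down‑step of a different flavour from beneficial mutations and from the pruning rule. Summing the stationarity identity $\sum_b \pi(b) G_L f(b) = 0$ against the test functions $f = \mathbf 1_{\{\,\cdot\, > n\}}$ converts it into a linear relation among the $a_n$'s; after collecting terms this should be exactly \eqref{thmrecursionanequation}, with the conventions \eqref{convention} handling the atom of $\Lambda$ at $1$ (a reproduction event replacing the whole population, i.e. $c = \infty$ merging to a finite state). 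Alternatively — and this is the route the paper advertises — one reads the recursion off the dual: by the Cox–Rösler correspondence the tail $\mathbb P(L_\infty > n)$ equals the probability that the Siegmund dual (the selective analogue of the fixation line), started just above $n$, is eventually absorbed; a first‑step decomposition of the dual's jump chain gives \eqref{thmrecursionanequation} directly.

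Finally I would argue uniqueness of the solution to \eqref{thmrecursionanequation} within the class $\{a_0 = 1,\ a_n \downarrow 0\}$. The equation is a linear system; given two solutions, their difference $d_n$ satisfies the homogeneous version with $d_0 = 0$ and $d_n \to 0$, and the structure of the recursion (each $d_n$ is a convex‑type combination of $d_{n-1}$, $d_{n+1}$ and the $d_{c-1}$ for $c > n+1$, with nonnegative weights summing appropriately because $(\sigma+\theta)$ on the left balances $\sigma + \theta\nu_1 \le \sigma + \theta$ plus the coalescence mass on the left) lets one run a maximum‑principle argument: a nonzero difference would attain its supremum, forcing propagation of that value to all larger indices, contradicting $d_n \to 0$ — unless $\sigma = 0$, where one instead uses absorption of $L$ in finitely many states and solves the resulting finite recursion explicitly. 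The probabilistic identification $a_n = \mathbb P(L_\infty > n)$ already supplies existence, so uniqueness closes the argument.

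I expect the main obstacle to be bookkeeping the rates of $G_L$ correctly — in particular getting the $\frac1n\binom{c-1}{c-n}$ coalescence rate out of the lookdown labelling and correctly incorporating the pruning‑induced and beneficial‑mutation‑induced down‑steps, together with the degenerate $c = \infty$ case governed by $\Lambda(\{1\})$ — so that the stationarity relation collapses precisely to \eqref{thmrecursionanequation} rather than to something only formally equivalent. The duality viewpoint is meant to make exactly this step transparent, so I would lean on it for the derivation and use the generator computation only as a consistency check.
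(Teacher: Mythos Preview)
Your overall architecture is right and matches the paper: $a_n=\mathbb P(L_{\mathrm{eq}}>n)$, the series follows from Proposition~\ref{LDASGstructure} plus Abel summation, and the recursion comes via the Siegmund dual. But your description of $G_L$ contains a concrete error that would derail the stationarity computation if you actually carried it out: deleterious mutations cause \emph{down}-steps of $L$, not up-steps. From \eqref{generatorprunedlambdaldasg} the only upward jump in $L$ is $\ell\sigma$ (selective branching); the rate $(\ell-1)\theta\nu_1$ kills a non-immune line and sends $L$ from $\ell$ to $\ell-1$. Likewise, the factor $\frac{1}{n}\binom{c-1}{c-n}\lambda_{c,c-n}$ in \eqref{thmrecursionanequation} is \emph{not} a coalescence rate of $L$ with a ``$1/n$ from the lookdown ordering''; in $G_L$ the merger rate from $\ell$ down to $c$ is $\binom{\ell}{\ell-c+1}\lambda_{\ell,\ell-c+1}$, and the expression $\binom{c-1}{c-n}\lambda_{c,c-n}$ is the \emph{upward} jump rate from $d=n+1$ to $c$ in the dual generator $G_D$ (Lemma~\ref{GenD}); the $1/n$ just comes from dividing the harmonic equation at $d=n+1$ by $n=d-1$.

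The paper does not take the stationarity-of-$L$ route at all (that is the method of \cite{Unserpaper} for the Kingman case, and is what the introduction calls ``somewhat technical''). Instead it goes entirely through the dual: Lemma~\ref{tailhit} gives $\alpha_n=\mathbb P_{n+1}(\exists\, t:D_t=1)$; Lemma~\ref{GenD} identifies $G_D$ via the pathwise flight duality; then \eqref{thmrecursionanequation} is nothing more than the statement $G_D\omega(n+1)=0$ with $\omega(d)=\alpha_{d-1}$, divided by $n$. So your ``alternative'' is the actual proof, and your ``primary'' route, once the sign error on $\theta\nu_1$ is fixed, would reproduce \cite{Unserpaper}'s computation extended to multiple mergers---valid, but more work.

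For uniqueness your maximum-principle sketch is in the right spirit, but note that the harmonic relation for the difference $b$ involves a jump to $\infty$ (via $\theta\nu_0$ and via $\Lambda(\{1\})$) where $b_\infty=0$, so the weights on the right do not sum to the coefficient on the left unless you include that term; this is exactly what makes the argument work. The paper packages this as optional stopping: $b$ is $G_D$-harmonic with $b_1=0$ and $b_n\to 0$, so $b_\ell=\mathbb E_\ell[b_{D_{T(k)}}]$ with $T(k)=\inf\{t:D_t\in\{1\}\cup\{k,k+1,\ldots\}\}$, and sending $k\to\infty$ kills $b$.
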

Let us discuss some special cases.
In the \emph{neutral case}, we clearly have $a_0=1$ and $a_n=0$ for $n > 0$, so $h(x)=x$, which is the neutral fixation
probability. For $\sigma>0$, we have $a_n > 0$ for all $n$, so $h(x)>x$ due to the higher-order terms in the series 
\eqref{gleichunghx}.
In the \emph{Kingman case}, 
the system of equations \eqref{thmrecursionanequation} simplifies to 
\begin{equation}
\left[ \frac{1}{n}\binom{n+1}{2} + \sigma + \theta \right] a_n 
= \frac{1}{n}\binom{n+1}{2} a_{n+1} + \sigma a_{n-1} + \theta\nu_1 a_{n+1}, \quad n \geq 1 ,
\label{thmrecursionanequationbinary}
\end{equation} 
and we immediately obtain	
\begin{corollary}[Fearnhead's recursion]
	In the Kingman case, the coefficients in \eqref{gleichunghx} satisfy the recursion
	\begin{equation}
	\left[\frac{1}{2}(n+1) + \sigma + \theta \right] a_n = \left[\frac{1}{2}(n+1) + \theta\nu_1\right] a_{n+1} + \sigma a_{n-1}, \quad n \geq 1,
	\label{fearnheadrecursion}
	\end{equation} 
	with $a_0=1$ and $\lim_{n\to \infty} a_n=0$.
	\label{corollaryfearnhead}
\end{corollary}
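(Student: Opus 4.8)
The plan is to obtain Corollary~\ref{corollaryfearnhead} as an immediate specialisation of Theorem~\ref{theoremhx} to the Kingman case $\Lambda=\delta_0$; no new argument is needed, only bookkeeping of which terms in the recursion \eqref{thmrecursionanequation} survive. First I would recall that for $\Lambda=\delta_0$ one has $\lambda_{b,j}=\delta_{2,j}$ for every $b\ge 2$, and in particular $\lambda_{\infty,\infty}=\Lambda(\{1\})=0$. Consequently, in the sum on the left-hand side of \eqref{thmrecursionanequation} the coefficient $\binom{c-1}{c-n}\lambda_{c,c-n}$ vanishes unless $c-n=2$, i.e.\ $c=n+2$, and the boundary term at $c=\infty$ drops out as well (via the convention \eqref{convention} together with $\lambda_{\infty,\infty}=0$). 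Thus the infinite sum collapses to the single summand at $c=n+2$.

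Next I would evaluate that summand. For $c=n+2$ we have $\binom{c-1}{c-n}=\binom{n+1}{2}$ and $\tfrac{1}{n}\binom{n+1}{2}=\tfrac{1}{2}(n+1)$, so \eqref{thmrecursionanequation} reduces to
\[
\frac{1}{n}\binom{n+1}{2}\,(a_n-a_{n+1}) + (\sigma+\theta)\,a_n = \sigma a_{n-1} + \theta\nu_1 a_{n+1},\qquad n\ge 1,
\]
which, after transposing $-\tfrac{1}{n}\binom{n+1}{2}a_{n+1}$ to the right-hand side, is exactly the intermediate form \eqref{thmrecursionanequationbinary}. Collecting the two $a_{n+1}$-contributions on the right and substituting $\tfrac{1}{n}\binom{n+1}{2}=\tfrac{1}{2}(n+1)$ then yields \eqref{fearnheadrecursion}. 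The boundary conditions $a_0=1$ and $\lim_{n\to\infty}a_n=0$, together with monotonicity and uniqueness of the solution, are inherited verbatim from Theorem~\ref{theoremhx}, and the series representation \eqref{gleichunghx} is unchanged.

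There is essentially no obstacle in this step: the only point meriting a moment's care is to confirm that the $c=\infty$ term in \eqref{thmrecursionanequation} truly disappears in the Kingman case, which it does because $\lambda_{\infty,\infty}=\Lambda(\{1\})=0$ irrespective of the value assigned by the convention \eqref{convention}. As a sanity check one recognises \eqref{fearnheadrecursion} as Fearnhead's recursion for the common ancestor type distribution in the classical two-type Wright--Fisher model with selection and mutation, so the corollary records consistency of our general result with the known Kingman case.
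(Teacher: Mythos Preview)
Your proposal is correct and follows essentially the same route as the paper: the corollary is stated as an immediate specialisation of Theorem~\ref{theoremhx}, noting that for $\Lambda=\delta_0$ one has $\lambda_{b,j}=\delta_{2,j}$ so that the sum in \eqref{thmrecursionanequation} collapses to the single term $c=n+2$, yielding \eqref{thmrecursionanequationbinary} and thence \eqref{fearnheadrecursion}. Your explicit check that the $c=\infty$ term vanishes is a nice addition but is not something the paper dwells on.
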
	
The case $\Lambda(dz) = dz$, $0\le z \le 1$,  leads to the so called {\em Bolthausen-Sznitman coalescent}. Although the latter does not have the property CDI, we still have $\sigma^{\ast} = \infty$. In this case one has the identity  $\frac 1n{\binom{c-1}{c-n}}\lambda_{c,c-n}=\frac 1{(c-n-1)(c-n)}$ (cf. \cite{berestycki2009} Sec. 6.1), and the system \eqref{thmrecursionanequation} simplifies to
\begin{equation}
	\left[1 + \sigma + \theta \right] a_n =  \sigma a_{n-1}+  \theta\nu_1a_{n+1} +\sum_{j=1}^\infty \frac 1{j(j+1)} a_{n+j} , \quad n \geq 1,
	\label{bsrecursion}
	\end{equation} 
	with $a_0=1$ and $\lim_{n\to \infty} a_n=0$.

Recursion \eqref{fearnheadrecursion} appears in  \cite{Fearnhead} in connection with a time-stationary Wright-Fisher diffusion (with selection and mutation).\footnote{Note that there is a difference of a factor $1/2$ in the scaling of \eqref{fearnheadrecursion} in comparison to \cite{Fearnhead,Unserpaper,Taylor}. This is because these papers use the diffusion part of the Wright-Fisher generator (see \eqref{generatorlambdawf}) without the factor 1/2. This corresponds to a pair coalescence rate of 2 in the Kingman case, while in the present paper we assume pair coalescence rate 1 throughout.}  In \cite{Taylor}, the representation \eqref{gleichunghx} together with \eqref{fearnheadrecursion} was derived by analytic methods. In \cite{Unserpaper}, again for the Kingman case, we gave a new, more probabilistic proof, interpreting the coefficients $a_n$ as {\em equilibrium tail probabilities} of the {\em  line-counting process of the pruned lookdown ASG} (see Sec.~\ref{sec:pruned}).  In the present paper we give a twofold extension: (i) we include the case of multiple mergers, and (ii) we use a strong Siegmund duality (and thus a fully probabilistic method) in order to derive the recursion \eqref{thmrecursionanequation}.

An analogue of the quantity $h(x)$ can also be defined for a Moran model with finite population size $N$: for $k \in \{0,1,\ldots, N\}$,  let $h^{N}_k$ be the probability that the individual whose offspring will take over the whole population at some later time is of type $0$ at time $0$, given the number of type-$0$ individuals in the population at time $0$ is $k$. In  \cite{KluthHustedtBaake} it is shown (for the Kingman case) that $h^{N}_{k}$ converges to $h(x)$ as $N \to \infty$ and $k/N \to x$. Here, we work in the infinite-population limit right away, in order to carve out some important features of the underlying mathematical structure.

%In order to determine $h(x)$, it turns out to be helpful to remove certain lines from the ASG. Indeed, as the mutations convey information on the types of lines, some lines in the $\Lambda$-ASG can be deleted from the set of  potential ancestors. This way, the $\Lambda$-ASG may be pruned, according to the ideas presented in \cite{Unserpaper} for the Kingman case.
%In the following section we review the p-LD-ASG \cite{Unserpaper} and extend it to 
%the  p-LD-$\Lambda$-ASG.

\section{\texorpdfstring{The pruned lookdown-$\Lambda$-ancestral selection graph}{The pruned lookdown-Lambda-ancestral selection graph}} \label{sec:pruned}

In the previous section, we have outlined the construction of the equilibrium $\Lambda$-ASG and layed out how the immortal line within it may be identified: Types are assigned
at time $0$, and the evolution is then followed forward in time.  In practice, however, this procedure is entangled due to the nested case distinctions required to identify the parental branch (incoming or continuing, depending on the type). In the Kingman case, we have solved this problem by \emph{ordering} the lines, and by \emph{pruning} certain lines
upon mutation \cite{Unserpaper}. The \emph{ordering} is achieved by arranging the coalescence events in a lookdown  manner, and by inserting the incoming branch
below the continuing branch at every selection event.  The \emph{pruning} takes care of the fact that the mutations convey information on the types of lines; this entails that some lines in the ASG can never be ancestral,  no matter which
types are assigned at time $0$, and can thus be deleted from the set of  potential ancestors. By construction, this removal  does not affect the
immortal line. 

More precisely, consider a realisation $\mathcal A$ of the ordered equilibrium ASG, decorated with the mutation events.  The corresponding lookdown version 
 is  obtained by placing the
lines on consecutive levels, starting at level $1$. We now proceed from $r=0$
in the direction of increasing $r$. When a  beneficial mutation event is encountered, we  delete all lines above it. When a deleterious mutation event occurs, we erase the
line that carries it; the lines above the affected line slide down to fill the space. One of the lines, called the {\em immune line}, is distinguished in that it is \emph{not} killed by mutations;
rather, it is relocated to the top. Let us anticipate that this is the line that is immortal if all lines at time 0 are assigned type 1. For illustrations and more details about the pruning procedure, see \cite{Unserpaper}. 

The resulting  \emph{pruned lookdown ASG} can also be generated
in one step, backward in time, in a Markovian manner. In what follows, we review this construction and extend it to 
the  pruned lookdown $\Lambda$-ASG. 

At each time $r$,  the pruned lookdown $\Lambda$-ASG $\mathcal G$ consists of a finite number $L_r$ of {\em lines}, 
i.e. the process $(L_r)_{r \in \mathbb{R}}$ takes values in the positive integers and  $L_r$ is the number of lines in $\mathcal G$ at time $r$.
The lines are numbered by the integers $1,\ldots, L_r$, to which we refer as {\em levels}. 
The evolution of the lines as $r$ increases is determined by a point configuration on $\mathbb{R} \times \big ( \mathcal{P}(\mathbb{N}) \cup (\mathbb N \times  \{\ast,\times,\circ \})\big)$, where $\mathcal{P}(\mathbb{N})$ is the set of subsets of $\mathbb{N}$ and $\mathcal P(\mathbb N)$ is equipped with  the $\sigma$-algebra generated by $\eta \mapsto \mathbf 1_\eta(i)$, $i \in \mathbb N$, $\eta \in \mathcal{P}(\mathbb{N})$.  Each of the points $(r, \tau)$ stands for a {\em transition element}~$\tau$ occurring at time $r$, that is, a  {\em merger}, a {\em selective branching}, a {\em deleterious mutation}, or a {\em  beneficial mutation} at time $r$. The level of the immune line at time $r$ is denoted by $M_r$; its precise meaning will emerge from Proposition \ref{LDASGstructure}.

Let us now detail the transition elements and their effects on $\mathcal G$ (see Figs.~\ref{flightselementsASG} and \ref{exampleASG}):

\begin{itemize} \item
	A {\em merger} at time $r$ is a pair $(r,\eta)$, where $\eta$ is a subset of $\mathbb N$. If \, $|\{1,\ldots, L_{r-}\}\cap~\eta|$ $ \le 1$, then $\mathcal G$ is not affected. If, however, $\{1,\ldots, L_{r-}\}\cap~\eta$ $ = \{j_1, \ldots, j_{\kappa}\}$ with $j_1 < \cdots < j_{\kappa}$ and $\kappa \ge 2$, then  the lines at levels $j_2, \ldots, j_\kappa$ merge into the line at level $j_1$. The remaining lines in $\mathcal G$ are relocated to `fill the gaps'
	while retaining their original order; this renders $L_r = L_{r-}-\kappa+1$. The immune line simply follows the line on level $M_{r-}$. 
	\item
	A {\em selective branching} at time $r$ is  a triple $(r,i,\ast)$, with  $i \in \mathbb N$. If $L_{r-} <  i$, then $\mathcal G$ is not affected. If $L_{r-} \ge i$, then a new line, namely the incoming branch, is inserted at level $i$ and all lines at levels $k\geq i$ (including the immune line if $M_{r-} \geq i$)
	are pushed one level upward to $k+1$, resulting in $L_r = L_{r-}+1$. In particular, the continuing branch is shifted from level $i$ 
	to  $i+1$. 
	\item 
	A {\em deleterious mutation}  at time $r$ is  a triple $(r,i,\times)$, with  $i \in \mathbb N$. If  $L_{r-} <  i$, then $\mathcal G$ is not affected. If $L_{r-} \ge i$ and $i \neq M_{r-}$, then the line at level $i$ is killed,  and the remaining lines in $\mathcal G$ (including the immune line) are relocated to `fill the gaps' (again in an order-preserving way), rendering $L_r = L_{r-}-1$. If, however, $i = M_{r-}$, then the line affected by the mutation is not killed but relocated to the currently highest level, i.e. $M_r= L_{r-}$. All  lines above  $i$ are shifted one level down, so that the gaps are filled, and in this case $L_r = L_{r-}$.
	\item 
	A {\em beneficial mutation}  at time $r$ is  a triple $(r,i,\circ)$, with  $i \in \mathbb N$. If  $L_{r-}  < i$, then $\mathcal G$ is not affected.  If $L_{r-} \geq i$, then all the lines at levels $> i$ are killed, rendering $L_r=i$,  and the immune line is relocated to $M_r = i$.
\end{itemize}
\begin{proposition} \label{LDASGstructure}
	Assume that for some $r_0 < 0$ we have $L_{r_0} = 1$, and assume there are finitely many transition elements that affect $\mathcal G$ between times $r_0$ and $0$. Consider an arbitrary assignment of types to the $L_0$ lines at time $r=0$.  Then  the level of the immortal line at time $0$ is either the lowest type-$0$ level at time $0$ or, if all lines at time $0$ are of type $1$, it is the level $M_0$ of the immune line at time $0$. In particular, the immortal line is of type $1$ at time $0$ if and only if all lines in $\mathcal{G}$ at time $0$ are assigned type $1$.
\end{proposition}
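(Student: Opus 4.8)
The plan is to argue by induction on the number $k$ of transition elements that affect $\mathcal{G}$ between $r_0$ and $0$. The base case $k=0$ is immediate: then $\mathcal{G}$ consists of a single line on all of $[r_0,0]$, so $L_0=M_0=1$, and the single line at time $0$ is trivially the immortal line — it is the (unique) type-$0$ level if it carries type $0$, and it equals $M_0=1$ if it carries type $1$.

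For the inductive step, let $\rho=(r_1,\tau)$ be the transition element with the largest time $r_1<0$ among those affecting $\mathcal{G}$. On $(r_1,0]$ nothing changes in $\mathcal{G}$: the lines, their order, and the immune level are constant there, and there are no mutation events, so the type assignment at time $0$ and the immune level $M_0$ coincide with those at time $r_1$, and it suffices to locate the immortal line at time $r_1$. Put $\mathcal{G}':=\mathcal{G}|_{[r_0,r_1)}$; this is again a pruned lookdown $\Lambda$-ASG that starts from a single line at $r_0$ but contains only $k$ transition elements, and at its right boundary $r_1-$ it carries $L_{r_1-}$ lines with immune level $M_{r_1-}$. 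Propagating the types assigned at time $r_1$ across $\rho$ produces a type assignment to the $L_{r_1-}$ lines of $\mathcal{G}'$, and the induction hypothesis, applied to $\mathcal{G}'$ with this assignment, tells us that the immortal line at time $r_1-$ sits at the lowest type-$0$ level there, or at $M_{r_1-}$ if all those lines carry type $1$. Since the immortal line is not removed by the pruning and, at a selective event, continues along the parental branch, its location at time $r_1$ is recovered from its location at $r_1-$ by inverting the (explicitly describable) effect of $\rho$ on the levels.

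What then remains is a case distinction over the four kinds of transition element, checking in each case that this yields exactly the lowest type-$0$ level at time $r_1$, or $M_{r_1}$ if all $L_{r_1}$ lines carry type $1$; this also produces the ``in particular'' statement, since the immune line is of type $1$ precisely in the all-type-$1$ configuration. For a merger one reads off that, forwards in real time, a parental line splits into several lines of its own type while the immune line follows the parental one, so the lowest type-$0$ level is only shifted by the number of lines inserted strictly below it. A deleterious mutation at a non-immune level removes a line that, read forwards, has just been hit by that mutation and is therefore of type $1$ — hence neither the lowest type-$0$ level nor the immune line — so again only an index shift occurs. A beneficial mutation at level $i$ deletes all lines above $i$ and makes $i$ the new immune level; one checks that afterwards the line at level $i$ carries type $0$, so the lowest type-$0$ level lies at most at $i$ and the immortal line is untouched. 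The genuinely delicate case is the selective branching $(r_1,i,\ast)$: the line at level $i$ at time $r_1-$ arises by resolving the incoming line (level $i$ at $r_1$) and the continuing line (level $i+1$ at $r_1$), carrying the type of whichever of the two is parental — the incoming one precisely when it is of type $0$ — and one must verify, distinguishing whether the lowest type-$0$ level at $r_1-$ lies below, at, or above $i$, that reversing this resolution together with the one-step level shift turns ``lowest type-$0$ level (or $M_{r_1-}$)'' at $r_1-$ into ``lowest type-$0$ level (or $M_{r_1}$)'' at $r_1$.

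The main obstacle is precisely this case analysis — above all the selective branching, together with the handling of the immune line across a deleterious mutation on it — where the level shifts interact with the type-dependent parental rule and with the special role of the immune line as the fallback ancestor when no type-$0$ line is present. None of the individual verifications is deep, but they must be run through all the sub-cases carefully, keeping track of how each transition acts simultaneously on the lines, their levels, the propagated types, and the distinguished immune line.
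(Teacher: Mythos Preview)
Your proposal is correct and follows essentially the same approach as the paper: induction on the number of transition elements affecting $\mathcal{G}$, with a case analysis over the four transition types in the inductive step. The paper's own proof is extremely short --- it simply cites Theorem~4 of \cite{Unserpaper} for the binary-merger case and observes that the induction step there carries over verbatim to multiple mergers --- so you are spelling out from scratch precisely the argument the paper imports by reference. (Minor slip: your $\mathcal{G}'$ contains $k-1$ transition elements, not $k$.)
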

\begin{proof}
In the absence of multiple mergers (i.e. if all mergers have exactly two elements), this is Theorem 4 in \cite{Unserpaper}. In its proof, the induction step for binary mergers  directly carries over to multiple mergers. 
\end{proof}

\begin{figure}[h]
	\centering
		\begin{minipage}[t]{0.95\textwidth}
			\centering
			\vspace{-0.1cm}
			\includegraphics[width=0.9\textwidth]{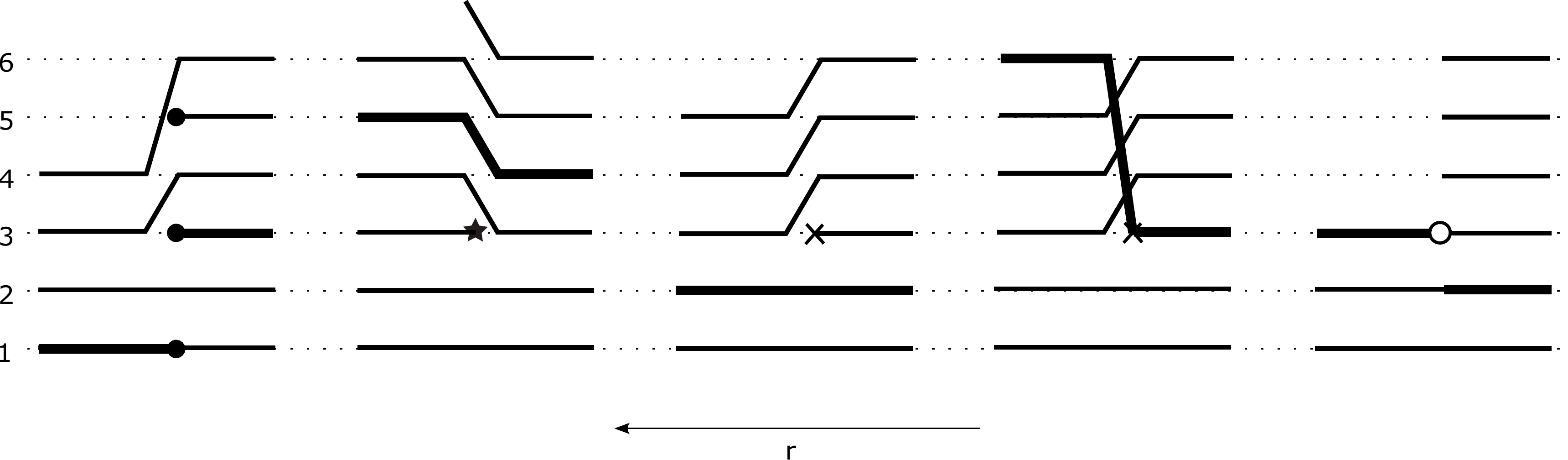}
			\caption{\small{Transitions  of the pruned lookdown {$\Lambda$-ASG}. Since the graph evolves `into the past',   time $r$ runs from right to left in the figure. 
			The value of $L$ is 6 before the jump; the immune line is marked in bold. From left to right: A `merger' of the lines on levels $1$, $3$, and $5$ (indicated by bullets);   a `star' at level $3$; a `cross' at level~$3$, outside the immune line; a `cross' on the immune line at level $3$; a `circle' at level $3$.    }}
			\label{flightselementsASG}
		\end{minipage}
\end{figure}
\begin{figure}[h]
	\centering
		\begin{minipage}[t]{0.95\textwidth}
			\centering
			\vspace{-0.1cm}
			\includegraphics[width=0.9\textwidth]{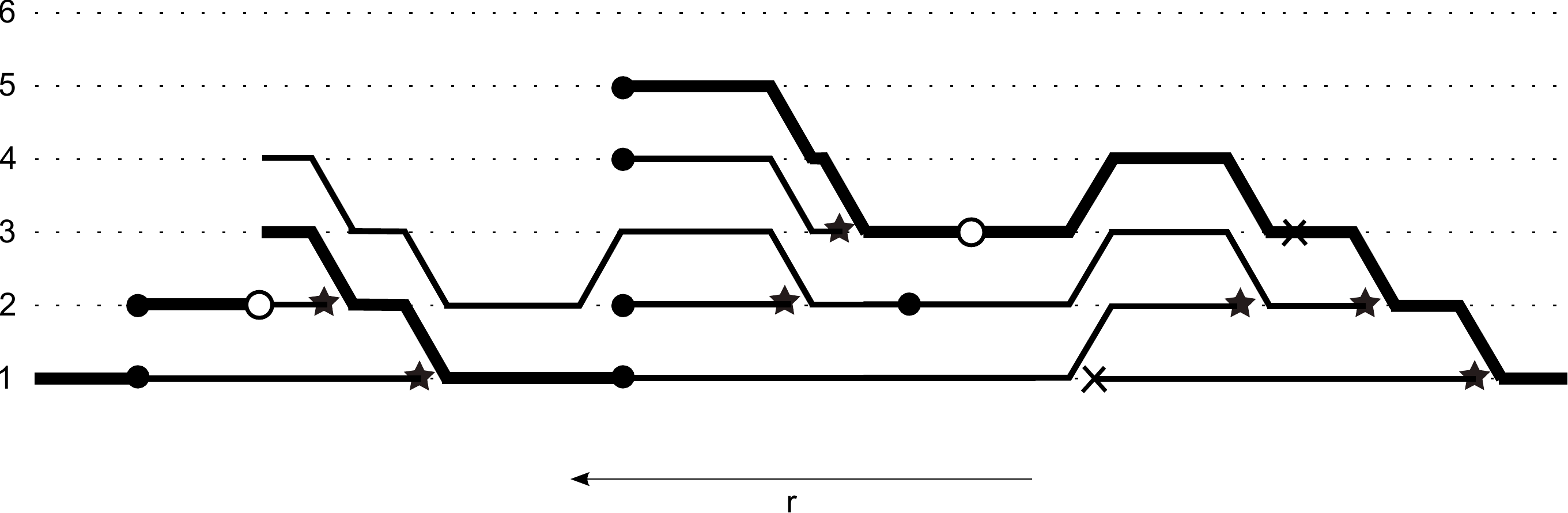}
			\caption{\small{A cut-out of a realisation of the pruned lookdown $\Lambda$-ASG. The immune line is marked in bold.}}
			\label{exampleASG}
		\end{minipage}
\end{figure}

Taking together the  above description of $\mathcal G$  and the rates defining the $\Lambda$-ASG (Sec.~2), we can
now summarise and formalise the law of $\mathcal G$ as follows.
The transition elements arrive via independent Poisson processes:
For each $i \in \mathbb N$,  the `stars', `crosses', and `circles' at level $i$ come as  Poisson processes with intensities $\sigma$, $\theta \nu_1$ and $\theta \nu_0$, respectively. For each 2-element subset $\eta$ of $\mathbb N$, the `$\eta$-mergers' come as a Poisson process with intensity $\Lambda(\{0\})$. In addition, we have a Poisson process with intensity measure $\mathbf{1}_{\{z>0\}}\frac 1{z^2}\Lambda(dz) \, dr $, where each $z$ generates a random subset $H(z):= \{i : V_i =1\} \subset \mathbb N$, with $(V_i)_{i \in \mathbb N}$ being a Bernoulli$(z)$-sequence, and the point $(r,z)$ gives rise to the merger $(r,H(z))$.  All these Poisson processes are independent. The points $(r, \tau)$ constitute a Poisson configuration $\Psi$, whose intensity measure we denote by $\mu \otimes \rho$, where   $\mu$ is  Lebesgue measure on $\mathbb R$.
With the transition rules described above, this induces Markovian jump rates upon $L_r$ and $(L_r,M_r)$. With the help of \eqref{mergerrateofkblocks}, it is easily checked that the generator $G_L$ of $L$ is  given by 
\begin{equation}
\begin{split}
G_{L} g(\ell) = & \sum_{c=1}^{\ell-1}{\binom{\ell}{\ell-c+1}} \lambda_{{\ell},{\ell-c+1}} \left[g(c)-g(\ell)\right] + \ell\sigma \left[g(\ell+1)-g(\ell)\right]\\
& + (\ell-1)\theta\nu_1 \left[g(\ell-1)-g(\ell)\right] + \sum_{k=1}^{\ell-1}\theta\nu_0 \left[g(\ell-k)-g(\ell)\right].
\end{split}
\label{generatorprunedlambdaldasg}
\end{equation}

Due to Assumption \ref{assumptioncdi} and Remark \ref{recurrence}b), and because $L$ is stochastically dominated by $K$, the process $L$ obeys
\begin{align}\label{hitzero}
\mathbf E_{\ell} [T_1] < \infty, \qquad \ell \in \mathbb N.
\end{align}
Thus $L$ has a time-stationary version $\widetilde L$ (which is $\widetilde L \equiv 1$ if $\sigma =0$), and likewise the pruned lookdown $\Lambda$-ASG has an equilibrium version as well. We now set $L_{\text{eq}} :=\widetilde L_0$ and denote the  tail probabilities of $L_{\text{eq}}$ by 
\begin{equation}
\alpha_n:=\mathbb{P}(L_{\text{eq}}>n), \quad n \in \mathbb{N}_0.
\label{deftailprobabilities}
\end{equation}
Because of \eqref{hitzero}, for almost all realisations of $\widetilde L$, there exists an $r_0<0$ such that $\widetilde L_{r_0} = 1$. Hence, arguing as in \cite[proof of Theorem 5]{Unserpaper}, we conclude from Proposition \ref{LDASGstructure} the following
\begin{corollary}
	Given the frequency of the beneficial type at time $0$ is $x$, the probability that the immortal line in the equilibrium p-LD-$\Lambda$-ASG at time $0$ is of beneficial type is
	\begin{equation}	\label{hrep}
	h(x)=\sum_{n\geq0} x(1-x)^{n}\alpha_n.
	\end{equation}
\end{corollary}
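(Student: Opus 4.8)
The plan is to read off the representation \eqref{hrep} from Proposition \ref{LDASGstructure} together with an elementary computation involving the stationary law of $L$.

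First I would fix the time-stationary version $\widetilde L$ of the line-counting process and the corresponding equilibrium version of the graph $\mathcal G$ driven by the Poisson configuration $\Psi$, both indexed by $r\in\mathbb R$. By \eqref{hitzero} and time-stationarity, the random set $\{r<0:\widetilde L_r=1\}$ is a.s.\ non-empty (indeed unbounded below), so we may pick $r_0<0$ with $\widetilde L_{r_0}=1$. Since $\widetilde L$ is stochastically dominated by $K$, which is non-explosive, the random level $B:=\sup_{r_0\le r\le 0}\widetilde L_r$ is a.s.\ finite. Every transition element of $\Psi$ that actually affects $\mathcal G$ on $[r_0,0]$ involves only levels $\le B$, and for each fixed $b$ the points of $\Psi$ involving only levels $\le b$ form a Poisson configuration of finite intensity on the compact interval $[r_0,0]$; a truncation over the events $\{B\le b\}\uparrow\Omega$ then shows that a.s.\ only finitely many transition elements affect $\mathcal G$ between $r_0$ and $0$. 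Hence the hypotheses of Proposition \ref{LDASGstructure} are met — this is exactly the argument used in \cite[proof of Theorem 5]{Unserpaper}, which I would simply quote. Since the pruning procedure does not change the immortal line, its type at time $0$ agrees with $I_0$ from Definition \ref{defhx}.

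Next I would condition on the restriction of $\mathcal G$ to $[r_0,0]$, in particular on $\{L_{\text{eq}}=\ell\}$. Given that the type-$0$ frequency at time $0$ equals $x$, the types of the $\ell$ lines present at time $0$ are i.i.d.\ Bernoulli$(x)$ and independent of $\mathcal G$, so by Proposition \ref{LDASGstructure} the immortal line is of type $1$ at time $0$ precisely when all $\ell$ lines receive type $1$, an event of probability $(1-x)^\ell$. Thus $\mathbb P(I_0=0\mid L_{\text{eq}}=\ell,\ X_0=x)=1-(1-x)^\ell$, and averaging over the equilibrium law of $L_{\text{eq}}$ gives $h(x)=\mathbb E\big[1-(1-x)^{L_{\text{eq}}}\big]$. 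Finally, using $1-(1-x)^\ell=\sum_{n=0}^{\ell-1}x(1-x)^n=\sum_{n\ge 0}x(1-x)^n\mathbf 1_{\{n<\ell\}}$ and Tonelli's theorem (all summands are non-negative) to exchange sum and expectation, I obtain
\[
h(x)=\sum_{n\ge 0}x(1-x)^n\,\mathbb P(L_{\text{eq}}>n)=\sum_{n\ge 0}x(1-x)^n\alpha_n,
\]
which is \eqref{hrep}.

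The substantive input is entirely contained in Proposition \ref{LDASGstructure}; the only point requiring genuine care is the first step, namely verifying that the \emph{stationary} graph a.s.\ satisfies the finiteness hypotheses of that proposition (existence of a regeneration time $r_0$ with $\widetilde L_{r_0}=1$, and finitely many transition elements affecting $\mathcal G$ on $[r_0,0]$) and that the type assignment at time $0$ is independent of the part of $\mathcal G$ on $[r_0,0]$ that determines the immortal line. Once that is settled, the remainder is the elementary computation above, formally identical to the Kingman-case argument of \cite[proof of Theorem 5]{Unserpaper}.
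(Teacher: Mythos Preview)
Your proposal is correct and follows exactly the route the paper takes: the paper merely notes that \eqref{hitzero} guarantees an $r_0<0$ with $\widetilde L_{r_0}=1$ and then defers entirely to \cite[proof of Theorem 5]{Unserpaper} together with Proposition~\ref{LDASGstructure}. You have simply unpacked that reference---the finiteness argument for transition elements on $[r_0,0]$, the conditional i.i.d.\ type assignment, and the telescoping identity $1-(1-x)^\ell=\sum_{n<\ell}x(1-x)^n$---which is precisely the content of the cited proof.
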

In order to further evaluate the representation \eqref{hrep}, we need information about the equilibrium tail probabilities $\alpha_n$. This is achieved in the following sections via  a process $D$ which is a Siegmund dual for $L$. 

\section{An application of Siegmund duality}
\label{sectionsiegmunddual}

A central point in our proof of Theorem \ref{theoremhx} will be that the equilibrium tail probabilities
of $L$ can be expressed as certain hitting probabilities of a process $D$ which is a so-called Siegmund dual of $L$. The relationship between the transition semigroups of $L$ and $D$ is given by formula \eqref{dualitycliffordsudbury} below. Intuitively, the process $D$ may be seen as going into the opposite time direction as $L$. In a suitable representation via stochastic flows, which turns out to be available for monotone processes, \eqref{dualitycliffordsudbury} means that the paths of $D$ remain `just above' those of $L$, see Sec. \ref{sectionflights} below.

\subsection{Tail probabilities and hitting probabilities}
\label{sectioncoxroesler}

It is clear that $L$ is stochastically monotone, that is, $\mathbb{P}_n(L_r\geq i) \geq \mathbb{P}_m(L_r\geq i)$ for $n \geq m$ and for all $i \in S$ (where the subscript refers to the initial value of the process). It is well known \cite{Siegmund} that such a process has a Siegmund dual, that is, there exists a process $D$ such that
\begin{equation}
\mathbb{P}_{\ell}(L_u \geq d) = \mathbb{P}_d(D_u \leq \ell) 
\label{dualitycliffordsudbury}
\end{equation} 
for all $u \geq 0$, $\ell,d \in \mathbb N$.

\begin{lemma}\label{tailhit}
	The tail probabilities of the stationary distribution of $L$ are hitting probabilities of the dual process $D$. To be specific,
	\begin{equation}
	\alpha_n=\mathbb{P}_{n+1}(\exists t\geq 0 : D_t=1) \quad \forall n \geq 0.
	\label{thmtailprobabilitiesequation}
	\end{equation} 
	\label{theoremCoxRoesler}
\end{lemma}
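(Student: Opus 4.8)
The plan is to exploit the classical link, due to Cox and Rösler, between the stationary distribution of a positively recurrent monotone process and the exit (hitting) behaviour of its Siegmund dual. Concretely, I would start from the duality relation \eqref{dualitycliffordsudbury} and take a double limit. First, fix $d=1$ and let $u\to\infty$: since $L$ is positive recurrent by \eqref{hitzero} and has stationary law $L_{\text{eq}}$, the left-hand side $\mathbb{P}_{\ell}(L_u\ge 1)=1$ trivially (the state space is $\mathbb N$), so that choice is too crude. Instead I would keep $d=n+1$ general and let $u\to\infty$ on the \emph{left}: $\mathbb{P}_{\ell}(L_u\ge n+1)\to \mathbb{P}(L_{\text{eq}}\ge n+1)=\mathbb{P}(L_{\text{eq}}>n)=\alpha_n$, using that the stationary distribution is the unique limit law irrespective of the (finite) starting level $\ell$. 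On the right-hand side this gives $\lim_{u\to\infty}\mathbb{P}_{n+1}(D_u\le \ell)=\alpha_n$ for every fixed $\ell$.

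Next I would let $\ell\to\infty$. The events $\{D_u\le\ell\}$ increase to $\{D_u<\infty\}$ as $\ell\uparrow\infty$, so $\lim_{\ell\to\infty}\lim_{u\to\infty}\mathbb{P}_{n+1}(D_u\le\ell)=\alpha_n$ still, but I now need to interchange this with an identification of the event on the right as a hitting event of $1$. The key structural input is that $D$, being a Siegmund dual of a process on $\mathbb N$ that is monotone and positive recurrent, is itself monotone and has the property that, started from $n+1$, with probability $\alpha_n$ it gets absorbed at (or eventually sits at) the level $1$, and with the complementary probability it escapes to $+\infty$. More precisely: $1$ is absorbing for $D$ (this should follow from \eqref{dualitycliffordsudbury} with $d=1$, giving $\mathbb{P}_1(D_u\le\ell)=\mathbb{P}_{\ell}(L_u\ge 1)=1$ for all $\ell\ge 1$, i.e. $D_u\equiv 1$ under $\mathbb P_1$), and $+\infty$ acts as the only other ``trap'' because $L$ is non-explosive and positive recurrent. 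Hence $\{D_u<\infty \text{ for all } u\}$ modulo null sets coincides with $\{\exists t\ge 0: D_t=1\}$, and $\lim_{u\to\infty}\lim_{\ell\to\infty}\mathbb{P}_{n+1}(D_u\le\ell)=\mathbb{P}_{n+1}(\exists t\ge0: D_t=1)$.

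To make the interchange of limits rigorous I would use a monotone-convergence / Dini-type argument: for each fixed $u$, $\ell\mapsto \mathbb{P}_{n+1}(D_u\le\ell)$ is nondecreasing with supremum $\mathbb{P}_{n+1}(D_u<\infty)$, and for each fixed $\ell$, $u\mapsto\mathbb{P}_{n+1}(D_u\le\ell)$ is monotone by the monotonicity of $D$ (decreasing, since $D$ drifts upward away from low levels unless absorbed at $1$); double limits of doubly monotone arrays commute, which legitimises $\alpha_n=\mathbb{P}_{n+1}(D_\infty\in\{1\})=\mathbb{P}_{n+1}(\exists t\ge0: D_t=1)$. I would note that no explicit description of the generator of $D$ is needed for this lemma; the transition dynamics of $D$ are pinned down later (Section~\ref{sectionflights}) via the stochastic-flow construction, and at this stage only the abstract existence from \cite{Siegmund}, the duality \eqref{dualitycliffordsudbury}, absorption at $1$, and the positive recurrence \eqref{hitzero} of $L$ enter.

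The main obstacle I anticipate is the justification that, under $\mathbb P_{n+1}$, the event $\{D \text{ stays finite forever}\}$ is (up to null sets) exactly $\{D \text{ hits } 1\}$ — in other words, ruling out that $D$ stays bounded without ever reaching $1$, and ruling out that $D$ oscillates. This is really a statement that the dual of a positively recurrent, non-explosive, monotone $\mathbb N$-valued chain is ``transient except for absorption at the bottom state,'' and the cleanest route is to push the equality $\lim_{u}\mathbb P_{n+1}(D_u\le\ell)=\alpha_n$ through for every $\ell$ simultaneously and argue that since $\sum_{n}(\alpha_n-\alpha_{n+1})$ telescopes, the mass $\lim_u\mathbb P_{n+1}(D_u=\ell)$ is concentrated on $\ell=1$; any mass escaping to $\infty$ is the complement $1-\alpha_n=\mathbb P(L_{\text{eq}}\le n)$. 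Handling the ``oscillation'' possibility may require invoking that $D$ inherits a Feller / strong-Markov property from the flow construction, or alternatively deriving it directly from the monotone-coupling representation — this is the point where I would be most careful.
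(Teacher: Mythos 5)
Your proposal contains the paper's proof as a special case, but you then take a detour that manufactures the very obstacle you flag at the end. Having established that $\lim_{u\to\infty}\mathbb{P}_{n+1}(D_u\le\ell)=\alpha_n$ for \emph{every} fixed $\ell$ (by letting $u\to\infty$ on the left of \eqref{dualitycliffordsudbury} with $d=n+1$), you should simply take $\ell=1$ rather than $\ell\to\infty$: then $\mathbb{P}_{n+1}(D_u\le 1)=\mathbb{P}_{n+1}(D_u=1)$, and since you have already shown that $1$ is absorbing for $D$ (from the duality with $\ell=d=1$), the events $\{D_u=1\}$ increase in $u$ to $\{\exists t\ge 0: D_t=1\}$, whence
\begin{equation*}
\alpha_n=\lim_{u\to\infty}\mathbb{P}_{n+1}(D_u=1)=\mathbb{P}_{n+1}(\exists t\ge 0: D_t=1).
\end{equation*}
This is exactly the argument in the paper, which evaluates \eqref{dualitycliffordsudbury} at $\ell=1$, $d=n+1$ and lets $u\to\infty$ on both sides. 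No interchange of limits, no dichotomy between absorption and escape to infinity, and no explicit knowledge of the dynamics of $D$ is needed.

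The detour via $\ell\to\infty$ is where the genuine gaps sit, and they are not cosmetic. First, your claim that $u\mapsto\mathbb{P}_{n+1}(D_u\le\ell)$ is monotone in $u$ for fixed $\ell>1$ does not follow from stochastic monotonicity of $D$: Siegmund-type monotonicity orders different starting levels at a fixed time, not the same starting level at different times, and the heuristic that $D$ ``drifts upward unless absorbed'' is not a proof. Second, identifying $\{D_u<\infty\text{ for all }u\}$ with $\{\exists t\ge 0:D_t=1\}$ up to null sets --- i.e.\ ruling out that $D$ stays bounded away from both $1$ and $\infty$, or oscillates --- would require structural input about $D$ that is only available later (e.g.\ the explicit dual jump rates of Lemma~\ref{GenD}, or a transience argument for $D$ restricted to $\{2,3,\ldots\}$). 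You correctly sense that this is the hard point of your route; the resolution is that the lemma never needs it, because absorption at $1$ together with the choice $\ell=1$ already supplies all the ``exit law'' structure required.
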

\begin{proof}
This is a special case of  \cite[Thm.~1]{CoxRoesler} for entrance and exit laws. In our case the entrance law is the equilibrium distribution of $L$, the exit law is a harmonic function (in terms of hitting probabilities), and the proof reduces to the following elementary argument. Namely,
evaluating
the  duality condition \eqref{dualitycliffordsudbury} for $\ell=1$ and $d=n+1$, $n\geq 0$, gives 
\begin{equation}
\mathbb{P}_{1}(L_u\geq n+1) = \mathbb{P}_{n+1}(D_u=1) \quad \text{for all } u \geq 0, \ n\geq 0.
\label{proofSiegmund1}
\end{equation} 
Taking the limit $u \to \infty$, the left-hand side converges  to  $\mathbb{P}(L_{\text{eq}}> n)= \alpha_n$ by positive recurrence and irreducibility. Setting $\ell=d=1$ in \eqref{dualitycliffordsudbury}, we see that $1$ is an absorbing state for $D$. Hence we have for the right-hand side of \eqref{proofSiegmund1}
\begin{equation*}
\lim_{u \to \infty} \mathbb{P}_{n+1}(D_u=1)= \mathbb{P}_{n+1}(\exists t\geq 0 : D_t=1) \quad \forall n \geq 0,
\label{proofSiegmund2}
\end{equation*} 
and the lemma is proven.
\end{proof}

Next we want to show that the (shifted) hitting probabilities 
\begin{equation}\label{hitprob}
\alpha_n = \mathbb{P}_{n+1}(\exists t\geq 0 : D_t=1), \quad  n \geq 0,
\end{equation}
satisfy the system of equations \eqref{thmrecursionanequation}. More precisely,   \eqref{thmrecursionanequation} will emerge as a first-step decomposition of the hitting probabilities. For this purpose, we first have to identify the jump rates of~$D$. This can be done  via a generator approach that translates the jump rates of the process $L$ (which appear in \eqref{generatorprunedlambdaldasg}) into their dual jump rates, see, for instance,  formula (12)  in \cite{CliffordSudbury} or in \cite{Siegmund}. For the jump rates coming from the mergers this is somewhat technical, see the calculations in the appendix  in \cite{olivier}.

Inspired by  \cite{CliffordSudbury} we will therefore take a `strong pathwise approach' that consists in decomposing the dynamics of $L$ into so-called \emph{flights}, which can be `dualised' one by one. While Clifford and Sudbury, starting from the generator of a monotone process, in \cite[Thm 1]{CliffordSudbury} construct a special Poisson process of flights  for which they form the duals (\cite[Thm 2]{CliffordSudbury}), in our situation the Poisson process of flights is naturally given (being induced by  the transition elements for $\mathcal G$ defined in Sec.~\ref{sec:pruned}, see Sec.~\ref{sec:dual_for_L} below). Consequently, we will show in Proposition~\ref{propdual} that the approach of \cite[Thm 2]{CliffordSudbury} works also when starting from a more general Poisson process of flights.

\subsection{Flights and their duals}
\label{sectionflights}
In \cite{CliffordSudbury}, Clifford and Sudbury introduced a graphical representation that allows us to construct a monotone homogeneous Markov process $\mathcal L$ together with its Siegmund dual $\mathcal D$ on one and the same probability space. The method requires that the state space $S$ of the processes $\mathcal L$ and $\mathcal D$ is  (totally) ordered. We restrict ourselves to the case $S:=\mathbb{N}\cup\{\infty\}$, which is the relevant one in our context (and which is prominent in \cite{CliffordSudbury} as well).

The basic  building blocks of Clifford and Sudbury's construction are so-called  \emph{flights}. A flight~$f$ is a mapping from $S$ into itself that is order-preserving, so $f(k) \leq f(\ell)$ for all $k < \ell$ with $k,\ell  \in  S$;  let us add that each flight leaves state $\infty$ invariant, so $f(\infty) = \infty$.
By the construction described below,  a flight $f$ that appears at time $r$ will induce the  transition to $\mathcal{L}_r=f(\ell)$, given $\mathcal{L}_{r-} =\ell$.  This way, transitions from different initial states will be coupled on the same probability space. A flight $f$ is  graphically represented as a set of simultaneous arrows pointing from $\ell$ to $f(\ell)$, for all $\ell \in S$, so that the process simply follows the arrows. Examples are shown in Fig. \ref{flightsaloneL}.

We denote the set of all flights by $\mathcal F$, and consider a Poisson process $\Phi$ on $\mathbb R \times \mathcal F$ whose intensity measure is of the form $\mu \otimes \gamma$, where $\mu$ is  again Lebesgue measure on $\mathbb R$,  and the measure $\gamma$ has the property
\begin{align}\label{locfin}
\gamma(\{f\in \mathcal F: f(\ell)\neq \ell\}) < \infty, \qquad \ell \in \mathbb N. 
\end{align}
Property \eqref{locfin} implies that with probability $1$, for all $\ell \in \mathbb N$ and $r\in \mathbb R$, among all the points $(s,f)$ in $\Phi$ with $s> r$ and $f(\ell) \neq \ell$, there is one whose $s$ is minimal. We denote this time by  $v(r, \ell)$. For $r \in \mathbb R$ and $\ell \in \mathbb N$,  we  define inductively a sequence $(s_0,\ell_0), (s_1,\ell_1), \ldots$ with $r=: s_0 <s_1<\cdots$, $\ell=:\ell_0, \ell_1,\ell_2,\ldots \in S$, by setting $s_i:= v(s_{i-1},\ell_{i-1})$, $\ell_i := f(\ell_{i-1})$, with $(s_i, f) \in \Phi$. (Note this procedure will terminate if  $\ell_i =\infty$ for some $i \in \mathbb N$.)

With the notation just introduced,  $\Phi$  induces a semi-group (a {\em flow}) of mappings, indexed by $r<s \in \mathbb R$, and defined by
\begin{align}\label{flow}
F_{r,s}(\ell):=\begin{cases} \ell_i \quad \ \mbox{ if } s_i \le s < s_{i+1}, \\ \infty \quad  \mbox{ if } \lim_{i \to \infty} s_i \le s \end{cases}
\end{align}
for $\ell \in \mathbb N$, with $F_{r,s}(\infty):= \infty$.

Assuming property \eqref{locfin}, we say that  $\Phi$ {\em represents} the  process $\mathcal L$ if  for all $s>0$ the  distribution of $F_{0,s}(\ell)$ is a version of the conditional distribution of $\mathcal L_s$ given $\{\mathcal L_0=\ell\}$, $\ell \in \mathbb N$. Equivalently, for all $r \in \mathbb R$ and $u > 0$,
\begin{equation} \label{repr}
\mathbb{P}_{\ell}(\mathcal L_u \in (.)) = \mathbb{P}(F_{r,r+u}(\ell) \in (.)).
\end{equation}

We now describe, in the footsteps of Clifford and Sudbury \cite{CliffordSudbury}, the construction of  a strong pathwise Siegmund dual $\mathcal D$, based on the same realisation of the flights as for the original process $\mathcal L$.  Def. \ref{dualflights} $a)$ formalises the statement at the beginning of Sec. \ref{sectionsiegmunddual} that the paths of $D$ remain `just above' those of $L$, see also Fig. \ref{flightsaloneL} for an illustration.

\begin{definition}[Dual flights] a) For a flight $f: S\to S$, its {\em dual flight} $\widehat f$ is defined by
	\begin{equation}
	\widehat f(d)=\min(f^{-1}(\{d, d+1,  \ldots\})), \quad  d \in S,
	\label{dualequation}
	\end{equation}
	with the convention $\min ( \varnothing ) =\infty$. 
	\label{dualflights}
	
	b) For a Poisson process $\Phi$ on $\mathbb R \times \mathcal F$, we define $\widehat {\Phi}$ as the result of $\Phi$ under the mapping $(r,f) \mapsto (-r, \widehat f)=:(t,\widehat f)$. Moreover, under the assumption
	\begin{equation}\label{duallocfin2}
	\gamma(\{f\in \mathcal F: \widehat f(d)\neq d\}) < \infty, \quad  d \in \mathbb N, 
	\end{equation}	
	we define $\widehat F$ in terms of $\widehat \Phi$ in the same way as $F$ was defined in terms of $\Phi$ by \eqref{flow}.
\end{definition}

It is clear that $\widehat f$ is order preserving. Since $f$ is monotone increasing by assumption, we have 
$\max(f^{-1}(\{1, \ldots, d-1\})) \leq \min(f^{-1}(\{d, d+1,  \ldots\}))$. 
Because $f^{-1}(\{1, \ldots, d-1\}) \cap f^{-1}(\{d, d+1,  \ldots\}) = \varnothing$  and  $f^{-1}(\{1, \ldots, d-1\}) \cup f^{-1}(\{d, d+1,  \ldots\}) = S$, we see that \eqref{dualequation} is equivalent to
	\begin{equation}
	\widehat f(d)=\max(f^{-1}(\{1, \ldots, d-1\}))+1, \quad  d \in S, 
	\label{dualequation2}
	\end{equation}
	with the convention $\max ( \varnothing ) =0$. 
Note further  that \eqref{duallocfin2} is implied by \eqref{locfin} together with 
\begin{align}\label{duallocfin}
\gamma(\{f\in \mathcal F: \exists k > \ell \text{ s.t. }  f(k)\leq \ell\}) < \infty, \qquad \ell \in \mathbb N. 
\end{align}

The following proposition is an adaptation of \cite[Theorem~2]{CliffordSudbury} to our setting. Compare also \cite[Section 4.1]{JansenKurt}.
\begin{proposition}\label{propdual}
	Assume \eqref{locfin} and \eqref{duallocfin}, and assume that $\infty$ is unattainable for the process  $\mathcal L$ represented by the Poisson process $\Phi$ with intensity measure $\mu \otimes \gamma$. Then the following strong pathwise duality relation is valid:  For all $s > 0; \, \ell, d \in \mathbb N$,
		\begin{align}\label{stronglypathwiseduality}
	\mathbf 1_{\{F_{-s,0}(\ell) \ge d\}} = \mathbf 1_{\{\widehat F_{0,s}(d) \le \ell\}}, \qquad \mbox{ almost surely.}
	\end{align}
\end{proposition}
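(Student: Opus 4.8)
The plan is to prove the pathwise identity \eqref{stronglypathwiseduality} by reducing it, via the flow decomposition, to a single-flight statement and then iterating.

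\textbf{Step 1: The single-flight duality.} First I would establish that for an \emph{individual} flight $f$ and its dual $\widehat f$, one has the equivalence
\[
f(\ell) \ge d \quad \Longleftrightarrow \quad \widehat f(d) \le \ell, \qquad \ell, d \in S.
\]
This is immediate from the definition \eqref{dualequation}: $\widehat f(d) = \min(f^{-1}(\{d,d+1,\ldots\}))$, so $\widehat f(d) \le \ell$ means there exists $k \le \ell$ with $f(k) \ge d$, and since $f$ is order-preserving this is equivalent to $f(\ell) \ge d$. (One must be a little careful with the boundary conventions $\min(\varnothing) = \infty$ and $f(\infty) = \infty$, but these are consistent with the claimed equivalence on all of $S$.) Equivalently, using \eqref{dualequation2}, $\widehat f(d) \le \ell \Leftrightarrow \max(f^{-1}(\{1,\ldots,d-1\})) \le \ell - 1 \Leftrightarrow$ all $k \le \ell$ satisfy $f(k) \ge d \Leftrightarrow f(\ell) \ge d$.

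\textbf{Step 2: Composition.} Next I would check that dualisation reverses composition: $\widehat{g \circ f} = \widehat f \circ \widehat g$ for order-preserving $f, g: S \to S$. This follows from Step 1 applied twice: $\widehat{g\circ f}(d) \le \ell \Leftrightarrow (g\circ f)(\ell) \ge d \Leftrightarrow f(\ell) \ge \widehat g(d) \Leftrightarrow \widehat f(\widehat g(d)) \le \ell$, and since two order-preserving maps agreeing on all sublevel sets coincide, $\widehat{g\circ f} = \widehat f \circ \widehat g$. Now recall that the flow is built from the Poisson points of $\Phi$: writing the points of $\Phi$ in the time window $(-s, 0)$ that actually move the relevant levels as $(s_0, f_1), \ldots, (s_{N}, f_{N+1})$ in increasing time order (finitely many by \eqref{locfin}, applied to the finitely many levels that can be visited starting from $\ell$ before $\infty$ — here one invokes the hypothesis that $\infty$ is unattainable for $\mathcal L$, so $F_{-s,0}(\ell) \in \mathbb N$ a.s.), we have $F_{-s,0} = f_{N+1} \circ \cdots \circ f_1$ on the orbit of $\ell$. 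Under the time-reversal $(r,f) \mapsto (-r, \widehat f)$ defining $\widehat\Phi$, the same points, now in the window $(0,s)$ and in the \emph{reversed} order, are $(-s_N, \widehat f_{N+1}), \ldots, (-s_0, \widehat f_1)$, so $\widehat F_{0,s}(d) = \widehat f_1 \circ \cdots \circ \widehat f_{N+1}(d) = \widehat{f_{N+1}\circ\cdots\circ f_1}(d) = \widehat{F_{-s,0}}(d)$, provided this stays finite. Combined with Step 1 this gives $\{F_{-s,0}(\ell) \ge d\} = \{\widehat{F_{-s,0}}(d) \le \ell\} = \{\widehat F_{0,s}(d) \le \ell\}$, which is \eqref{stronglypathwiseduality}.

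\textbf{The main obstacle.} The delicate point is the bookkeeping of which flights are \emph{relevant} and the role of the two local-finiteness assumptions. Condition \eqref{locfin} guarantees that, starting from $\ell$ and running $F$ forward, only finitely many flights act before possibly reaching $\infty$; condition \eqref{duallocfin} (via \eqref{duallocfin2}) plays the symmetric role for $\widehat F$ started from $d$. One has to argue that the \emph{finite} set of flights moving the forward orbit of $\ell$ is exactly the finite set of dual flights moving the backward orbit of $d$ \emph{on the event that the relevant quantities are finite}, and handle the possibility that one side escapes to $\infty$ while the other does not. The hypothesis that $\infty$ is unattainable for $\mathcal L$ rules out $F_{-s,0}(\ell) = \infty$; on the dual side, if $\widehat F_{0,s}(d) = \infty$ then $\widehat F_{0,s}(d) \le \ell$ is false, and one must check $F_{-s,0}(\ell) \ge d$ is also false — which follows because $\widehat F_{0,s}(d) = \infty$ forces (by Step 1 applied to the partial composition before the escape) $F_{-s,0}(\ell) < d$. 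Making this case analysis airtight, while keeping track of the order-reversal of the Poisson points under the reflection $r \mapsto -r$, is where the real care is needed; the algebraic identities of Steps 1 and 2 are routine once the correct finite index sets are in hand.
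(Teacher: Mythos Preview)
Your approach is essentially the paper's: both rest on the single-flight identity $f(\ell)\ge d \Leftrightarrow \widehat f(d)\le \ell$ (the paper's \eqref{stronglypathwisedualityinf}, proved just as in your Step~1) and then iterate across the flights in $(-s,0)$. Your composition rule $\widehat{g\circ f}=\widehat f\circ\widehat g$ is correct and is a tidy algebraic repackaging of the induction the paper runs step by step along the two trajectories $Y_r:=F_{-s,r}(\ell)$ and $\widehat Y_t:=\widehat F_{0,t}(d)$.

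The gap sits exactly where you place it, but your proposed resolution is not right: the set of flights moving the forward orbit of $\ell$ need \emph{not} coincide with the set whose duals move the backward orbit of $d$. A flight $g$ at time $-u$ may fix the current forward state $Y_{(-u)-}$ while $\widehat g$ genuinely moves $\widehat Y_{u-}$, so your composition $\widehat f_1\circ\cdots\circ\widehat f_{N+1}(d)$, built from the forward-relevant flights only, can differ from the true $\widehat F_{0,s}(d)$. The paper avoids this by working over the \emph{union} $\widehat J$ of the jump times of $Y$ and of $\widehat Y$: stepping through $\widehat J$ (well-ordered from below by \eqref{duallocfin2}), the invariant $Y_0\ge \widehat Y_0 \Leftrightarrow Y_{(-u_i)-}\ge \widehat Y_{u_i}$ is maintained at each step via \eqref{stronglypathwisedualityinf}, which is your composition identity unfolded one flight at a time. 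Once you replace ``the flights moving the orbit of $\ell$'' by this union, your Step~2 goes through cleanly. Your sketch for the escape $\widehat F_{0,s}(d)=\infty$ is then the paper's argument verbatim: $\widehat Y_{u_i}\to\infty$ while $Y$ remains bounded forces $Y_{(-u_i)-}<\widehat Y_{u_i}$ for large $i$, and the invariant yields $F_{-s,0}(\ell)<d$.
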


\begin{proof}
Let   $Y:= (Y_r)_{r \in [-s,0]} := (F_{-s,r}(\ell))_{r \in [-s,0]}$, and $\widehat Y:= (\widehat Y_t)_{t \in [0,s]} := (\widehat F_{0,t}(d))_{t \in [0,s]}$, for given $\ell$, $d$, and $s$. Due to \eqref{locfin} and the assumption that $\infty$ is unattainable, $Y$ has a.s only finitely many jumps; let us denote the jump times by  $-r_1, \ldots, -r_n$. We write $\widehat J$ for the union of $\{r_1, \ldots, r_n\}$ and the set of jump times  of $\widehat Y$. Because of \eqref{duallocfin2}, $\widehat J$ has a smallest element, a second-smallest element, and so on. We denote these elements by $u_1 < u_2 < \ldots$, and show that 
\begin{equation}\label{notraverse}  Y_{0} \ge \widehat Y_{0}  \mbox{ if and only if }  Y_{(-u_i)-} \ge \widehat Y_{u_i}, \qquad i=1,2,\ldots
\end{equation}
Proceeding by induction, for \eqref{notraverse} it is sufficient to show
\begin{equation}
\mathbf{1}_{\{f(j) \geq k\}} =	\mathbf{1}_{\{\widehat f(k) \le j\}}  
\label{stronglypathwisedualityinf}
\end{equation}
for all flights $f$, and $j, k \in \mathbb N$.
Let $f\in \mathcal F$.  On the one hand, 
$f(j) \geq k$ yields  
\[
\widehat f (k) \leq \widehat f \bigl (f(j)\bigr ) =  \min \bigl (f^{-1}(\{f(j), f(j)+1, \ldots\})\bigr) = \min \bigl (f^{-1}(f(j))\bigr ) \leq j,
\]
where we have used order preservation of $\widehat f$ and $f$ as well as \eqref{dualequation}.
On the other hand, $f(j) < k$ is equivalent to $f(j)+1 \leq k$. By order preservation and \eqref{dualequation2}, this entails 
\[
\widehat f(k) \geq \widehat f \bigl (f(j+1)\bigr ) = \max \bigl (f^{-1}(\{1, \ldots, f(j)\})\bigr )+1  =  \max \bigl (f^{-1}(f(j))\bigr )+1  \geq j+1 > j.
\]
We have thus shown \eqref{stronglypathwisedualityinf}, and hence also \eqref{notraverse}.

If $(u_i)$ has no accumulation point, then it has a maximal element, say $u_m$. Choosing $i=m$ in the r.h.s. of \eqref{notraverse} yields \eqref {stronglypathwiseduality} (since $u_m \neq s$ with probability 1). If $(u_i)$ has an accumulation point, say  $\tau$, then, because of \eqref{duallocfin2}, we have \,  $\lim_{t \uparrow \tau} \widehat Y_{t}=\lim_{i\to \infty} \widehat Y_{u_i}$ $ = \infty$. Because $Y$ remains bounded by assumption, this together with \eqref{notraverse} enforces that $Y_0 < \widehat Y_{0}$. This means that the l.h.s. of \eqref{stronglypathwiseduality} takes the value 0. However, this is the case also for the r.h.s of \eqref{stronglypathwiseduality}, since $\infty = \widehat Y_{\tau} = \widehat Y_s > \ell$.
\end{proof}

In view of \eqref{repr} we  immediately  obtain the following 
\begin{corollary}\label{dualcoro}
	In the situation of Proposition \ref{propdual}, let $\mathcal D$ be a process represented by $\widehat \Phi$. Then $\mathcal L$ and $\mathcal D$ satisfy the duality relation \eqref{dualitycliffordsudbury}, with $L$ and $D$ replaced by $\mathcal L$ and $\mathcal D$.
\end{corollary}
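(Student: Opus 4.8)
The plan is to derive the duality relation \eqref{dualitycliffordsudbury} for $\mathcal L$ and $\mathcal D$ by taking expectations in the strong pathwise identity \eqref{stronglypathwiseduality} and then translating the flow statements into statements about the processes via the representation property \eqref{repr}. First I would fix $u>0$ and $\ell, d \in \mathbb N$ and apply the representation assumption to $\mathcal L$: by \eqref{repr} with $r=-u$, the distribution of $F_{-u,0}(\ell)$ coincides with the law of $\mathcal L_u$ given $\{\mathcal L_0 = \ell\}$, so
\[
\mathbb P_\ell(\mathcal L_u \ge d) = \mathbb P(F_{-u,0}(\ell) \ge d) = \mathbb E\big[\mathbf 1_{\{F_{-u,0}(\ell)\ge d\}}\big].
\]
Next I would invoke Proposition \ref{propdual} (which applies since \eqref{locfin}, \eqref{duallocfin} and unattainability of $\infty$ for $\mathcal L$ are in force) to replace the indicator $\mathbf 1_{\{F_{-u,0}(\ell)\ge d\}}$ by $\mathbf 1_{\{\widehat F_{0,u}(d)\le \ell\}}$ almost surely, whence
\[
\mathbb P_\ell(\mathcal L_u \ge d) = \mathbb E\big[\mathbf 1_{\{\widehat F_{0,u}(d)\le \ell\}}\big] = \mathbb P(\widehat F_{0,u}(d) \le \ell).
\]

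The remaining step is to recognise the right-hand side as $\mathbb P_d(\mathcal D_u \le \ell)$. Since $\mathcal D$ is by definition a process represented by $\widehat \Phi$, and since \eqref{duallocfin2} (which holds by the note after Definition \ref{dualflights}, being implied by \eqref{locfin} and \eqref{duallocfin}) guarantees that $\widehat F$ is well defined via \eqref{flow} applied to $\widehat \Phi$, the representation property \eqref{repr} for $\mathcal D$ gives $\mathbb P_d(\mathcal D_u \le \ell) = \mathbb P(\widehat F_{0,u}(d) \le \ell)$. Chaining the three displays yields
\[
\mathbb P_\ell(\mathcal L_u \ge d) = \mathbb P_d(\mathcal D_u \le \ell) \qquad \text{for all } u>0,\ \ell, d \in \mathbb N,
\]
which is \eqref{dualitycliffordsudbury} with $L, D$ replaced by $\mathcal L, \mathcal D$; the case $u=0$ is trivial since both sides equal $\mathbf 1_{\{\ell \ge d\}}$.

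I do not expect a serious obstacle here: the corollary is essentially a restatement of Proposition \ref{propdual} once both sides have been passed through the representation \eqref{repr}. The one point requiring a little care is the bookkeeping between forward time for $\mathcal L$ and backward time for $\mathcal D$ — i.e.\ that $\widehat \Phi$ is obtained from $\Phi$ by the time reversal $(r,f)\mapsto(-r,\widehat f)$, so that the flow $\widehat F_{0,u}$ built from $\widehat \Phi$ over $[0,u]$ is exactly the object appearing on the right of \eqref{stronglypathwiseduality} — but this is precisely the matching of indices already arranged in the statement of Proposition \ref{propdual}, so invoking it directly avoids any reindexing. A second minor point is to make sure that the stationarity/homogeneity built into \eqref{repr} lets us take $r=-u$ (equivalently, shift to the interval $[0,u]$ or $[-u,0]$ at will); this is immediate from the translation invariance of the Poisson intensity $\mu\otimes\gamma$ under shifts of the $\mathbb R$-coordinate.
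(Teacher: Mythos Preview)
Your proposal is correct and follows exactly the approach the paper intends: the paper simply states that the corollary is obtained ``in view of \eqref{repr}'' from Proposition~\ref{propdual}, and you have spelled out this immediate passage (taking expectations in \eqref{stronglypathwiseduality} and applying \eqref{repr} on both sides) in full detail.
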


\subsection{\texorpdfstring{A Siegmund dual for the process $L$}{A Siegmund dual for the process L}} \label{sec:dual_for_L}
Let us now turn to our case where $\mathcal{L}=L$. With each of the transition elements $\eta$, $(i, \ast)$, $(i, \times)$, $(i, \circ)$ introduced in Sec.~\ref{sec:pruned} 
we associate a flight defined as follows ($\ell \in S, i \in \mathbb N$):
\begin{equation}\label{flightsL1}
\begin{split}
& f_{\eta}(\ell) =  \ell - | \{ 1, ...., \ell \} \cap \tilde \eta |, \qquad \text{where } \tilde \eta := \eta \setminus \{\min (\eta)\}, \\[2mm]
& f_{i, *}(\ell) = \begin{cases}  \ell, & \ell < i, \\  \ell+1, & \ell \geq i, \end{cases}  \qquad
f_{i,\times}(\ell) = \begin{cases} \ell, & \ell \leq i, \\  \ell-1, & \ell > i,  \end{cases} \qquad 
 f_{i, \circ}(\ell) = \begin{cases}  \ell, & \ell \leq i, \\  i, & \ell > i, \end{cases}
\end{split}
\end{equation}
compare also Fig. \ref{flightsaloneL}. The flights are  indeed  order preserving. The structure of $f_{\eta}$, $f_{i, *}$, and $f_{i, \circ}$ is clearly inherited from that of the corresponding transition elements. The flights $f_{i,\times}(\ell)$ forget about the position (but not about the existence) of the immune line within the p-LD-$\Lambda$-ASG.  Indeed, recall that the downward jump rate of $L$  due to deleterious mutations is  $(\ell - 1) \theta \nu_1$; this reflects the fact that crosses arrive at rate $\theta \nu_1$ per line, but are ignored on the immune line, no matter where it is located. This is taken into account in the definition of the flight  $ f_{i,\times}$ by setting $f_{\ell,\times}(\ell) = \ell$. 

Let us now start from the Poisson configuration $\Psi$ (of points $(r,\tau)$ with intensity measure $\mu \otimes \rho$), as described in Sec. \ref{sec:pruned}. Let $\gamma$ be the image of the measure $\rho$ under the mapping $\tau \mapsto f_\tau$, where $f_\tau$ is the flight belonging to the transition element $\tau$ as defined in \eqref{flightsL1}. The measure $\gamma$ has property \eqref{locfin}. To see this we write  $\gamma= \gamma_m + \gamma_\ast + \gamma_\times + \gamma_\circ$, where the 4 summands describe the intensity measures of the flights stemming from the mergers, the selective branchings, the deleterious mutations and the beneficial mutations. It is straightforward that  $\gamma_\ast, \gamma_\times$ and $\gamma_\circ$ obey \eqref{locfin}. To see that also  $\gamma_m$ obeys \eqref{locfin}, note that for $\ell \in \mathbb N$
\begin{align}\label{locfinex}
\gamma_m(\{f\in \mathcal F: f(\ell)\neq \ell\}) = \rho(\{\eta: |\eta\cap \{1,\ldots,\ell\} |\ge 2\})\le {\binom{\ell}{2}},
\end{align}
since $\{\eta: |\eta\cap \{1,\ldots,\ell\} |\ge 2\} \subset \bigcup_{1\le i < j \le \ell}\{\eta:\{i,j\} \subset \eta\}$ and because for all $i < j \in \mathbb N$
\begin{align}\label{mergingrates}
\rho(\{\eta:\{i,j\} \subset \eta\}) = \int_{(0,1]} z^2 \frac 1{z^2} \Lambda(dz) + \Lambda(\{0\})=1.
\end{align}
 
Writing $\Phi$  for the Poisson point process with 
 intensity measure $\mu \otimes \gamma$, it is now clear that  $\Phi$ represents the process~$L$ in the sense of \eqref{flow} and \eqref{repr}, because the jump rates match those appearing in the generator \eqref{generatorprunedlambdaldasg}.

\begin{figure}[htbp]
	\centering
		\begin{minipage}[t]{0.9\textwidth}
			\centering
			\vspace{-0.1cm}
			\includegraphics[width=\textwidth]{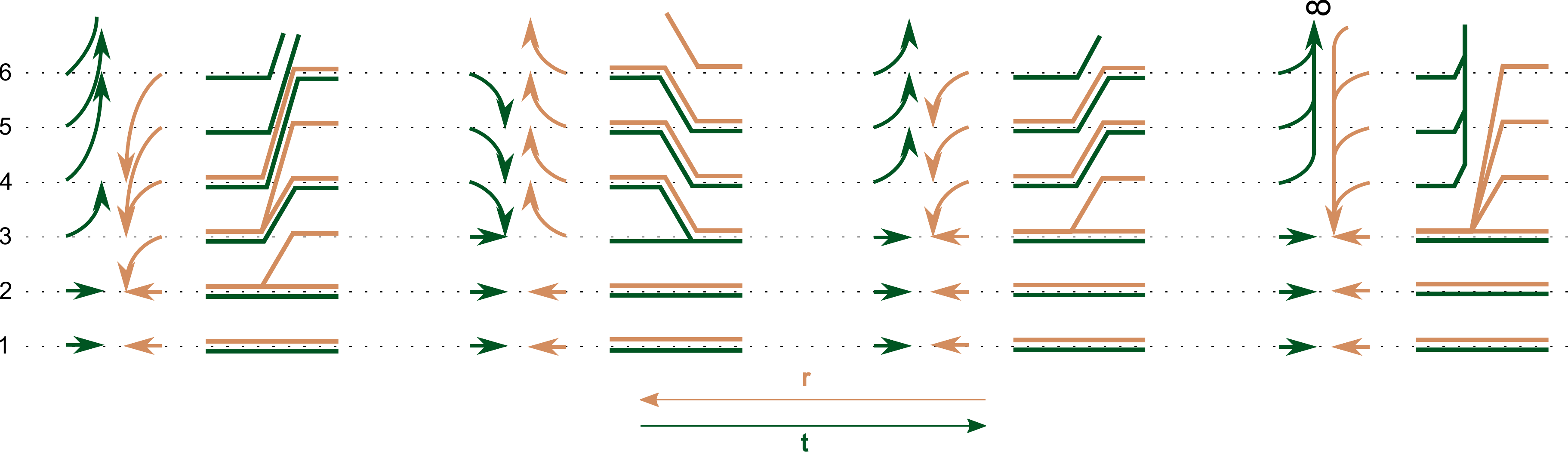}
			\caption{\small{Graphical representation of the four types of flights defined in \eqref{flightsL1} (light brown arrows) and their dual flights as defined in \eqref{flightsD} (dark green arrows), together with the resulting paths of $L$ (light brown)  and  $D$ (dark green). The flights displayed  are $f_{\eta}$ (with $\eta \cap \{1, \ldots,6\} = \{1,3,5\}$), $f_{3, *}$, $f_{3,\times}$, $f_{3,\circ}$; and $\widehat{f}_{\eta}$, $\widehat{f}_{3, *}$, $\widehat{f}_{3,\times}$, $\widehat{f}_{3,\circ}$. The flight $\widehat{f}_{3,\circ}$ maps all states $d > 3$ to the absorbing state $\infty$. The paths of $L$ and $D$  follow the
			arrows in the direction of backward and forward time, respectively.}}			
		\label{flightsaloneL}
		\end{minipage}
\end{figure}

Let us now check that $\gamma$ also satisfies assumption \eqref{duallocfin}.  It is straightforward that  $\gamma_\ast, \gamma_\times$ and $\gamma_\circ$ obey \eqref{duallocfin}. To see that also  $\gamma_m$ obeys \eqref{duallocfin}, we note that for $k \ge 2\ell +2$ and $\eta \subset \mathbb N$ the inequality $f_\eta(k)  \le \ell $ implies that  $\lvert \eta \cap \{1,\ldots, \ell+1\}\rvert \, \ge 1$ and  $\lvert \eta \cap \{\ell+2,\ldots, 2\ell+2\}\rvert \,  \ge 1$. Let $H_\ell$ denote the set of all $\eta \in \mathcal P(\mathbb N)$ having the latter property. Then we have for all $\ell \in \mathbb N$ the estimate
\begin{align*}
&\gamma_m(\{f\in \mathcal F: \exists k > \ell \text{ s.t. }  f(k)\leq \ell\}) \\ &\le \sum_{k=\ell+1}^{2\ell +1}\gamma_m (\{f\in \mathcal F: f(k)\neq k\})
+\gamma_m(\{f\in \mathcal F: \exists k \ge 2\ell +2 \text{ s.t. }  f(k)\leq \ell\}) 
\\
& \le \sum_{k=\ell+1}^{2\ell +1} {\binom{k}{2}}+ \rho(H_\ell) <\infty,
\end{align*}
because of \eqref{locfinex} and \eqref{mergingrates}, since $H_\ell \subset \bigcup_{1\le i \le  \ell+1 < j \le 2\ell+2} \{\eta: \{i,j\}\subset \eta\}$.

Following Definition \ref{dualflights}, we can now consider a process $D$ represented by $\widehat \Phi$. According to Corollary \ref {dualcoro}, $L$ and $D$ then obey the duality relation \eqref{dualitycliffordsudbury}. It remains to read off the jump rates of $D$ from the intensities of the (dual) flights.
\begin{lemma}\label{GenD}
	The generator $G_D$ of the process $D$ is given by
	\begin{equation}
	\begin{split}
	G_{D} g(d) = &\sum_{d<c\le \infty}{\binom{c-1}{c-d+1}} \lambda_{{c},{c-d+1}} \left[g(c) - g(d)\right] +(d-1)\sigma\left[g(d-1)-g(d)\right]\\
	& + (d-1)\theta\nu_1 \left[g(d+1)-g(d)\right] + (d-1)\theta\nu_0 \left[g(\infty)-g(d)\right] \ , \quad d \in \mathbb{N}, g: S \to \mathbb{R}.
	\label{generatordualprozess}
	\end{split}
	\end{equation}
	\label{dualgeneratorlemma}
	where we again use the convention \eqref{convention}.
	\end{lemma}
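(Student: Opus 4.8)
The strategy is to compute the dual flights $\widehat f_\tau$ for each of the four transition elements $\tau$ of the p-LD-$\Lambda$-ASG from the explicit formulas \eqref{flightsL1} via Definition \ref{dualflights}, and then read off the jump rates of $D$ from the intensity measure $\gamma$ carried over to $\widehat\Phi$. Since the process $D$ is represented by $\widehat\Phi = \sum \delta_{(-r,\widehat f_\tau)}$, each type of transition element contributes a jump of $D$ whose target is determined by $\widehat f_\tau$ and whose rate is inherited from the corresponding $\gamma$-intensity.

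First I would treat the three `single-point' flights. Applying \eqref{dualequation2}:
\begin{itemize}
\item For $f_{i,*}$ (insert a line at level $i$), one checks $\widehat f_{i,*}(d) = d$ for $d \le i$ and $\widehat f_{i,*}(d) = d-1$ for $d > i$; so $\widehat f_{i,*}$ acts on $D$ exactly as $f_{i,\times}$ acts on $L$. Since a star at level $i$ arrives at rate $\sigma$ per level, a state $d$ is decreased by the stars at levels $i=1,\dots,d-1$, giving a downward rate $(d-1)\sigma$.
\item For $f_{i,\times}$ (the deleterious-mutation flight, which merely `forgets' the immune line and sets $f_{\ell,\times}(\ell)=\ell$), one computes $\widehat f_{i,\times}(d) = d$ for $d \le i$ and $d+1$ for $d>i$; so crosses raise $D$ by one, and the relevant ones are those at levels $i=1,\dots,d-1$, yielding an upward rate $(d-1)\theta\nu_1$. (The truncation $f_{\ell,\times}(\ell)=\ell$ is precisely what makes the upward rate $(d-1)\theta\nu_1$ rather than $d\theta\nu_1$.)
\item For $f_{i,\circ}$ (beneficial mutation at level $i$ kills everything above), one has $f_{i,\circ}^{-1}(\{d,d+1,\dots\}) = \varnothing$ for $d>i$, hence $\widehat f_{i,\circ}(d) = \infty$ for $d>i$, while $\widehat f_{i,\circ}(d)=d$ for $d\le i$. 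Thus a circle at level $i$ sends $D$ to $\infty$ iff $d>i$; summing over $i=1,\dots,d-1$ at rate $\theta\nu_0$ each gives the term $(d-1)\theta\nu_0[g(\infty)-g(d)]$.
\end{itemize}

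The merger flights are the main obstacle. For a subset $\eta$ with $\min\eta = m$, the flight is $f_\eta(\ell) = \ell - |\{1,\dots,\ell\}\cap\tilde\eta|$ with $\tilde\eta=\eta\setminus\{m\}$. I would use \eqref{dualequation2}: $\widehat f_\eta(d) = \max\{k : f_\eta(k) \le d-1\}+1$. Since $f_\eta$ is the `order-preserving collapse' removing the elements of $\tilde\eta$, its dual sends $d$ to ($d$ plus the number of elements of $\tilde\eta$ below the relevant threshold) — concretely, if $\{1,\dots,d-1\}$ contains $j$ elements of $\eta$ (equivalently $j-1$ or $j$ elements of $\tilde\eta$, depending on whether $m \le d-1$), then $\widehat f_\eta(d)$ jumps up accordingly. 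The key point is to pass from the level-wise description to the \emph{rate} at which $D$ jumps from $d$ to a given $c>d$. This requires the computation \eqref{mergingrates}-type bookkeeping: a merger that, restricted to $\{1,\dots,c-1\}$, uses $c-d+1$ specified levels (including the minimum) occurs at the rate $\rho$ assigns to $\{\eta : |\eta\cap\{1,\dots,c-1\}| = c-d+1 \text{ with that configuration}\}$, and summing over the $\binom{c-1}{c-d+1}$ choices of these levels, together with $\rho(\{\eta : \{i_1,\dots\}\subset\eta, \text{rest}\notin\eta\}) = \lambda_{c,c-d+1}/\binom{\cdot}{\cdot}$-style integrals against $z^{-2}\Lambda(dz)$, produces exactly $\binom{c-1}{c-d+1}\lambda_{c,c-d+1}$. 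This is the analogue of the (acknowledged as technical) calculation in the appendix of \cite{olivier}; the convention \eqref{convention} handles the limiting case $c=\infty$, arising from $\Lambda(\{1\})$ (a reproduction event replacing the whole population, whose merger flight collapses all of $\mathbb N$ to a single level, and whose dual accordingly maps every $d\ge 2$ to $\infty$). Collecting all four contributions yields \eqref{generatordualprozess}.
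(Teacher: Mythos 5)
Your proposal is correct and follows essentially the same route as the paper: compute the dual flights $\widehat f_{i,*}$, $\widehat f_{i,\times}$, $\widehat f_{i,\circ}$, $\widehat f_\eta$ from \eqref{dualequation}--\eqref{dualequation2}, then read off the rates from $\gamma$, identifying the merger contribution with the event $\{\eta : c\notin\eta,\ |\{1,\dots,c-1\}\cap\eta| = c-d+1\}$ whose $\rho$-mass is $\binom{c-1}{c-d+1}\lambda_{c,c-d+1}$, and treating the $\Lambda(\{1\})$ jump to $\infty$ separately. The paper carries out the $z$-integral $\int\binom{c-1}{c-d+1}z^{c-d+1}(1-z)^{d-1}z^{-2}\Lambda(dz)$ explicitly where you only sketch it, but the bookkeeping you describe is exactly that computation.
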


\begin{proof}
We claim that the flights that are dual to those in \eqref{flightsL1} are of the form
\begin{equation}\label{hatfeta}
\widehat f_{\eta}(d) =  \min\{\ell:|\{1,\ldots,\ell\}\cap (\mathbb N\setminus \tilde \eta)| = d \}, \qquad 
\mbox{again with } \tilde \eta = \eta \setminus \{\min (\eta)\},  
\end{equation}
\begin{equation}\label{flightsD}
\widehat f_{i, *}(d) = \begin{cases}  d, & d \leq i, \\  d-1, & d > i \end{cases} \qquad
\widehat f_{i,\times}(d) = \begin{cases}  d, & d \leq i,  \\  d+1, & d > i \end{cases} \qquad 
\widehat f_{i, \circ}(d) = \begin{cases}  d, & d \leq i,  \\  \infty, & d > i, \end{cases}
\end{equation}
$d \in S$, $i \in \mathbb{N}$ (see   Fig. \ref{flightsaloneL}).

The
expressions in \eqref{flightsD} are obvious consequences of \eqref{dualequation} and \eqref{flightsL1}.
To verify \eqref{hatfeta}, we first note that, due to  Definition  \ref{dualflights}, we have   
$\widehat f_\eta(d)=\min\bigl ( f^{-1}_\eta(d) )$, since $f_\eta$ is surjective and monotone increasing. Consequently, in the case  $|\{1, \ldots,d\} \cap \eta| \leq 1$ we have $\widehat f_{\eta}(d)=d$, whereas otherwise we have
$\widehat f_\eta (d)  = \min \{ \ell: | \{1, ...., \ell \} \cap \tilde \eta | = \ell - d  \} > d$, both in accordance with \eqref{hatfeta}.

Let us now consider the contribution of the various types of flights to $G_D$. For $c\neq d \in \mathbb N$ we have to compute $\gamma(\{f: \widehat f(d) =c\})$. It is clear that the contributions from $\gamma_\ast$, $\gamma_\times$ and $\gamma_\circ$ yield the last 3~summands in 
\eqref{generatordualprozess}. For the contribution coming from $\gamma_m$, we have for $d < c < \infty$
\begin{align}\label{intmerge}
\gamma_m(\{f: \widehat f(d) =c\}) = \rho(\{\eta: c \notin \eta,  \lvert \{1, \ldots,c-1\} \cap \eta \rvert = c-d+1\}).
\end{align}
The contribution from the Kingman mergers to the right-hand side of of \eqref{intmerge} is $\Lambda(\{0\}) \binom{c-1}{2}$ if $c=d+1$, and $0$ otherwise.
For $z>0$, the probability that a $z$-merger does not affect level $c$ but does affect $c-d+1$ out of the levels $1,\ldots,c-1$ is  $\binom{c-1}{c-d+1}z^{c-d+1} (1-z)^{d-1} $. Integrating this  with respect to  $\frac 1{z^2}\Lambda(dz)$ and adding the Kingman component shows that the right-hand side of \eqref{intmerge} equals ${\binom{c-1}{c-d+1}} \lambda_{{c},{c-d+1}}$. These are the jump rates from $d$ to $c< \infty$ that appear in the first sum on the r.h.s. of \eqref{generatordualprozess}. It remains to take into account the jump rate of $D$ from $d$ to $\infty$. For this we note that $f_{\mathbb N}(\ell) = 1$, $\ell = 1,2,\ldots$, and consequently $\widehat f(d) = 1$ if $d=1$ and $\widehat f(d) = \infty$ if $d \ge 2$. These flights appear at rate $\Lambda(\{1\})$, and thus for $d\ge 2$ add the term $(g(\infty)-g(d))\Lambda(\{1\})$ to the generator.
\end{proof}

\begin{remark} \label{connectionfixationline}
	In the case without selection and mutation (that is, $\sigma=\theta=0$), our process $D$ shifted by one, that is, $D-1$, is equal to the so-called {\em fixation line} in \cite{olivier}. In this case one has no pruning, and the line-counting process $K$ has generator \eqref{generatorlambdaasg} (with $\sigma = 0$).  The (Siegmund) duality between $K$ and $D$ is stated in \cite[Lemma 2.4]{olivier}. See also  \cite[Thm 2.3]{GaiserMoehle} for a corresponding statement on the still more general class of exchangeable coalescents.
\end{remark}

We now come  to the
\begin{proof}[Proof of Theorem~\ref{theoremhx}.]
Consider the tail probabilities $\alpha_n = \mathbb P(L_{eq} > n)$, $n \in \mathbb N_0$, as defined in \eqref{dualitycliffordsudbury}. Lemma \ref{dualitycliffordsudbury} allows us to write them as hitting probabilities of $D$. Specifically, with 
$$\omega(n):= \mathbb P_n(\exists t \ge 0: D_t=1),$$ 
we have $\omega(n) = \alpha_{n-1}$. 
The hitting probabilities $\omega(n)$, $2\le n < \infty$, constitute a $G_D$-harmonic function, that is,
\begin{equation}\label{harmonic}
G_D \omega(n) = 0,\quad n\ge 2.
\end{equation}
It is this relation that is equivalent to the system \eqref{thmrecursionanequation}.
Indeed, \eqref{harmonic} translates into the system
%
%By Lemma \ref{GenD}, the hitting probabilities $\omega(n) := \mathbb P_n(\exists t \ge 0: D_t=1)$, $2 \le n < \infty$, constitute a $G_D$-harmonic function, that is,
%\begin{equation}\label{harmonic}
%G_D \omega(n) = 0,\quad n\ge 2.
%\end{equation}
%Because of Lemma \ref{tailhit} we have $\omega(n) = \alpha_{n-1}$, with $\alpha_n$ being the tail probability $\mathbb P(L_{\rm eq}>n)$. 
%A direct check shows that \eqref{harmonic} is equivalent to the system
\begin{equation}
\begin{split}
&\left[ \sum_{n+1<c\leq \infty}{\binom{c-1}{c-n}} \lambda_{{c},{c-n}} +  n\sigma + n\theta\nu_1 + n\theta\nu_0 \right] \alpha_n\\
&\qquad = \sum_{n+1<c\leq \infty}{\binom{c-1}{c-n}} \lambda_{{c},{c-n}} \alpha_{c-1} + n\sigma \alpha_{n-1} + n\theta\nu_1 \alpha_{n+1}, \quad n \geq 1, 
\end{split}
\label{equationproofrecursion1}
\end{equation}
again using the convention \eqref{convention}. Being tail probabilities, the  $\alpha_n$, $n \geq 0$, are monotone, with $\alpha_0=1$, and $\alpha_{\infty} := \lim_{j \to \infty} \alpha_j=0$. Together with these boundary conditions, 
Eq.~\eqref{equationproofrecursion1}  divided by $n$ gives the system \eqref{thmrecursionanequation} with $a_n$ replaced by  $\alpha_n$.

To prove uniqueness, let $(\alpha_n)$ be as above, $(a_n)$ be a solution of \eqref{thmrecursionanequation}, and put $b_n:=a_{n-1}-\alpha_{n-1}$. Then we have the boundary conditions $b_1=0$ and $b_n \to 0$ for $n \to \infty$. In addition, since both  $(\alpha_{n-1})_{2\le n < \infty}$ and $(a_{n-1})_{2\le n < \infty}$ are $G_D$-harmonic,  $(b_n)_{2 \le n < \infty}$ is $G_D$-harmonic as well.
Let $T(k):=\min \{t \geq 0 : D_t \in \{1,k ,k+1, \ldots\}\}$. Note that $T(k)$ is finite a.s. for every $k>1$.
Since, given $D_0= \ell$,  $(b_{D_t})_{t\ge 0}$ is a bounded martingale, due to the optional stopping theorem we have $b_\ell = \mathbf{E}[b_{D_{T(k)}}\mid D_0=\ell]$ for all $k > 1$. Because  $b_{D_{T(k)}} \to 0$ as $k \to \infty$, by dominated convergence this implies $b_\ell=0$ for all $\ell$, and hence the desired uniqueness.
\end{proof}

\section*{Acknowledgements}
	We thank Martin M\"{o}hle for a valuable hint that helped to include the star-shaped coalescent into Theorem \ref{thmrecursionanequation}. We are also grateful to Fernando Cordero and Sebastian Hummel for fruitful discussions. Our thanks also go to two referees for a careful reading, and for comments and suggestions which helped to improve the presentation. This project received financial support from
	Deutsche Forschungsgemeinschaft (Priority Programme SPP 1590 \emph{Probabilistic
		Structures in Evolution}, grants no. BA 2469/5-1 and WA 967/4-1).


\begin{thebibliography}{99}

\providecommand{\bysame}{\leavevmode\hbox to3em{\hrulefill}\thinspace}
\providecommand{\MR}{\relax\ifhmode\unskip\space\fi MR }
% \MRhref is called by the amsart/book/proc definition of \MR.
\providecommand{\MRhref}[2]{%
	\href{http://www.ams.org/mathscinet-getitem?mr=#1}{#2}
}
\providecommand{\href}[2]{#2}
	
	\bibitem{BahPardoux}
	B.~Bah and E.~Pardoux, \emph{The {$\Lambda$}-lookdown model with selection},
	Stoch. Proc. Appl. \textbf{125} (2015), 1089--1126.
	
	\bibitem{berestycki2009}
	N.~Berestycki, \emph{Recent progress in coalescent theory}, Ensaios Mat.
	\textbf{16} (2009).
	
	\bibitem{CliffordSudbury}
	P.~Clifford and A.~Sudbury, \emph{A sample path proof of the duality for
		stochastically monotone {M}arkov processes}, Ann. Probab. \textbf{13} (1985),
	558--565.
	
	\bibitem{CoxRoesler}
	J.~T. Cox and U.~R{\"o}sler, \emph{A duality relation for entrance and exit
		laws for {M}arkov processes}, Stoch. Proc. Appl. \textbf{16} (1984),
	141--156.
	
	\bibitem{Depperschmidtetal}
	A.~Depperschmidt, A.~Greven, and P.~Pfaffelhuber, \emph{Tree-valued
		{F}leming-{V}iot dynamics with mutation and selection}, Ann. Appl. Probab.
	\textbf{22} (2012), 2560--2615.
	
	\bibitem{DonnellyKurtz}
	P.~Donnelly and T.~G. Kurtz, \emph{Genealogical processes for {F}leming-{V}iot
		models with selection and recombination}, Ann. Appl. Probab. \textbf{9}
	(1999), 1091--1148.
	
	\bibitem{EtheridgeGriffithsTaylor}
	A.~M. Etheridge, R.~C. Griffiths, and J.~E. Taylor, \emph{A coalescent dual
		process in a {M}oran model with genic selection, and the {L}ambda coalescent
		limit}, Theor. Popul. Biol. \textbf{78} (2010), 77--92.
	
	\bibitem{Fearnhead}
	P.~Fearnhead, \emph{The common ancestor at a nonneutral locus}, J. Appl.
	Probab. \textbf{39} (2002), 38--54.
	
	\bibitem{Foucart}
	C.~Foucart, \emph{The impact of selection in the {$\Lambda$}-{W}right-{F}isher
		model}, Electron. Commun. Probab. \textbf{18} (2013), 1--10.
	
	\bibitem{Griffiths}
	R.~C. Griffiths, \emph{The {$\Lambda$}-{F}leming-{V}iot process and a
		connection with {W}right-{F}isher diffusion}, Adv. Appl. Probab. \textbf{46}
	(2014), 1009--1035.
	
	\bibitem{GaiserMoehle} F.~Gaiser and M.~M\"{o}hle, \emph{On the block counting process and the fixation line of exchangeable coalescents}, preprint available at arXiv:1603.09077 [math.PR].
	
	\bibitem{HerrigerMoehle}
	Ph.~Herriger and M.~M\"{o}hle, \emph{Conditions for exchangeable coalescents to come down from
              infinity}, ALEA Lat. Am. J. Probab. Math. Stat.  \textbf{9} (2012), 637--665.
	
	\bibitem{olivier}
	O.~H{\'e}nard, \emph{The fixation line in the {$\Lambda$}-coalescent}, Ann.
	Appl. Probab. \textbf{25} (2015), 3007--3032.
	
	\bibitem{JansenKurt}
	S.~Jansen and N.~Kurt, \emph{On the notion(s) of duality for {M}arkov
		processes}, Probab. Surv. \textbf{11} (2014), 59--120.
	
	\bibitem{KluthHustedtBaake}
	S.~Kluth, T.~Hustedt, and E.~Baake, \emph{The common ancestor process
		revisited}, Bull. Math. Biol. \textbf{75} (2013), 2003--2027.
	
	\bibitem{KroneNeuhauser}
	S.~M. Krone and C.~Neuhauser, \emph{Ancestral processes with selection}, Theor.
	Popul. Biol. \textbf{51} (1997), 210--237.
	
	\bibitem{Unserpaper}
	U.~Lenz, S.~Kluth, E.~Baake, and A.~Wakolbinger, \emph{Looking down in the
		ancestral selection graph: A probabilistic approach to the common ancestor
		type distribution}, Theor. Popul. Biol. \textbf{103} (2015), 27--37.
	
	\bibitem{DonnellyKurtza}
	{P. Donnelly} and {T. G. Kurtz}, \emph{Particle representations for
		measure-valued population models}, Ann. Probab. \textbf{27} (1999), 166--205.
	
	\bibitem{PfaffelhuberWakolbinger}
	P.~Pfaffelhuber and A.~Wakolbinger, \emph{The process of most recent common
		ancestors in an evolving coalescent}, Stoch. Proc. Appl. \textbf{116} (2006),
	1836--1859.
	
	\bibitem{pitman1999}
	J.~Pitman, \emph{Coalescents with multiple collisions}, Ann. Probab.
	\textbf{27} (1999), 1870--1902.
	
	\bibitem{Sagitov}
	S.~Sagitov, \emph{The general coalescent with asynchronous mergers of ancestral
		lines}, J. Appl. Probab. \textbf{36} (1999), 1116--1125.
	
	\bibitem{Siegmund}
	D.~Siegmund, \emph{The equivalence of absorbing and reflecting barrier problems
		for stochastically monotone {M}arkov processes}, Ann. Probab. \textbf{4}
	(1976), 914--924.
	
	\bibitem{Taylor}
	J.~E. Taylor, \emph{The common ancestor process for a {W}right-{F}isher
		diffusion}, Electron. J. Probab. \textbf{12} (2007), 808--847.
	
\end{thebibliography}
\end{document}